\documentclass[12pt]{amsart}
\usepackage{amsthm,amssymb,amsmath,amscd,epic,eepic}
\usepackage[all]{xy}
\usepackage{graphicx}
\usepackage{enumerate}
\usepackage{epstopdf}
\usepackage{a4wide}
\frenchspacing
\usepackage[pagewise]{lineno}

\newtheorem{theorem}{Theorem}[section]
\newtheorem{lemma}[theorem]{Lemma}
\newtheorem{proposition}[theorem]{Proposition}

\newtheorem{corollary}[theorem]{Corollary}
\newtheorem{conjecture}[theorem]{Conjecture}
\theoremstyle{remark}
\newtheorem{remark}[theorem]{Remark}
\newtheorem{example}[theorem]{Example}

\def\S{\mathbin{\mathbb S}}
\def\T{\mathbin{\mathbb T}}
\def\R{\mathbin{\mathbb R}}
\def\Q{\mathbin{\mathbb Q}}
\def\Z{\mathbin{\mathbb Z}}
\def\C{\mathbin{\mathbb C}}
\def\P{\mathbin{\mathbb P}}
\newcommand{\bb}[1]{\mathbb{#1}}
\newcommand{\evj}{\eta_{\varepsilon_j}}
\newcommand{\ii}{\textrm{Int }}
\def\eor{\unskip\ \hglue0mm\hfill$\diamond$\smallskip\goodbreak}

\begin{document}
\markboth{Aleksandra Marinkovi\'c, Milena Pabiniak}
{Displaceability of pre-Lagrangians in contact toric manifolds.}

\title[On displaceability in contact toric manifolds]{ON DISPLACEABILITY OF PRE-LAGRANGIAN TORIC FIBERS IN CONTACT TORIC MANIFOLDS}

\author{A. MARINKOVI\'C, M. PABINIAK}
\address{CAMGSD, 
Mathematics Department, 
Instituto Superior T\'ecnico, Universidade de Lisboa\\
Av. Rovisco Pais, 1049-001 Lisboa, Portugal\\
aleksperisic@yahoo.com}
\address{Mathematisches Institut, Universit\"at zu K\"oln, Weyertal 86-90, D-50931 K\"oln, Germany
\\pabiniak@math.uni-koeln.de}

\maketitle

\begin{abstract}
In this note we analyze displaceability of 
pre-Lagrangian toric fibers
in contact toric manifolds. While every symplectic toric manifold contains at least one non-displaceable Lagrangian toric fiber and infinitely many displaceable ones, we show that this is not the case for contact toric manifolds. More precisely, we prove that for the contact toric manifolds $\S^{2d-1} (d\geq 2)$ and $\T^k \times \S^{2d+k-1} (d \geq 1)$ all pre-Lagrangian toric fibers are displaceable, and that for all contact toric manifolds for which the toric action is free, except possibly non-trivial $\T^3$-bundles over $\S^2$, all pre-Lagrangian toric fibers are non-displaceable. Moreover we also prove that if for a compact connected contact toric manifold all but finitely many pre-Lagrangian toric fibers are non-displaceable then the action is necessarily free. On the other hand, as we will discuss, displaceability of all pre-Lagrangian toric fibers seems to be related to the non-orderability of the underlying contact manifolds.
\end{abstract}
\section{Introduction}
One of the questions of great importance in symplectic geometry is whether a given Lagrangian submanifold of a symplectic manifold can be displaced off itself via a Hamiltonian isotopy. If the symplectic manifold is toric, i.e. it can be equipped with an effective Hamiltonian action of a torus of dimension equal to half of the dimension of the manifold, then every generic toric orbit (i.e. of maximal dimension) is a Lagrangian submanifold. It is usually called a Lagrangian toric fiber as it is a fiber of the moment map. Displaceability properties of Lagrangian toric fibers in symplectic toric manifolds have been extensively studied. In particular the following two important results have been proved
\begin{itemize}
\item[(A)] any compact connected symplectic toric manifold contains a non-displaceable Lagrangian toric fiber (\cite{fooo:2010}, \cite{fooo:2011}, \cite{entpol}, \cite{GW}, \cite{marinkovicpabiniak}), whereas 
\item[(B)] any compact connected symplectic toric manifold has uncountably many displaceable Lagrangian toric fibers  (\cite{mcduff}, \cite{abreubormanmcduff}).
\end{itemize}
The goal of this paper is to study the analogue of the above questions in the setting of contact toric manifolds.

The question of displaceability of Lagrangians in symplectic manifolds can be translated to the contact setting in two different ways.
Given a {\it pre-Lagrangian} $L^d$ in a contact manifold $(V^{2d-1},\xi)$ one can ask if there exists a contact isotopy $\varphi_t$ of $(V^{2d-1},\xi)$ which displaces $L^d$.
Or, given a Legendrian submanifold $N^{d-1}$ of $(V^{2d-1},\xi)$ one can ask if there exists a contact isotopy $\varphi_t$ of $(V^{2d-1},\xi)$ such that there are no Reeb chords between $N^{d-1}$ and $\varphi_1(N^{d-1})$. 
In this work we concentrate on the first question as this is the direct translation of the problem of displaceability of generic torus orbits: 
if $(V^{2d-1},\xi)$ is a {\it contact toric manifold}, then the generic orbits of the toric action are pre-Lagrangian submanifolds and are fibers of the moment map (with the exception of $(\T^3, \ker \alpha_k)$, $k>1$, described in Section \ref{section contact manifolds}, where each fiber consists of $k$ orbits).
For a precise definition of pre-Lagrangians, moment maps in the contact setting and other background information we direct the reader to Section \ref{section contact manifolds}.
We show that displaceability properties of pre-Lagrangian toric fibers in contact toric manifolds are very different from those of Lagrangian toric fibers in the symplectic setting.
None of the above itemized statements holds when translated to the contact toric setting.

First we observe that contact toric manifolds may have all pre-Lagrangian toric fibers displaceable, i.e. the contact version of (A) does not hold.

\begin{proposition}\label{displace sphere} 
Every pre-Lagrangian toric fiber in the standard contact toric sphere $\mathbb{S}^{2d-1},$ $d\geq2$, is displaceable.
\end{proposition}
In fact all closed proper subset of $(\S^{2d-1},\xi_{st})$ are displaceable (Proposition \ref{displace all closed}). 
It is also worth observing that there exists a contact isotopy that simultaneously displaces all these pre-Lagrangian toric fibers (Remark \ref{displace all with one iso}). Moreover, the standard contact sphere is not the only instance of this phenomenon.
\begin{theorem}\label{displace in boundary of stabilization}
Every pre-Lagrangian toric fiber in $\T^k \times \S^{2d+k-1}$, $d\geq1$, is displaceable. 
\end{theorem}
This phenomenon seems to be related to the notion of {\it orderability} introduced by Eliashberg and Polterovich in \cite{elpo} and recalled here in Section \ref{section contact manifolds}. 
The contact manifolds $\S^{2d-1}$ and $\T^k \times \S^{2d+k-1} (d \geq 2)$ are known to be not orderable \cite{EKimP}. To the best of our knowledge, at the time of writing these are all contact toric manifolds which are {\it proved} to be non-orderable. Orderability of $\T^k \times \S^{k+1}$ remains an open question, even for $k=1$. The fact that Theorem \ref{displace in boundary of stabilization} holds also for $d=1$ provides a slight indication that $\T^k \times \S^{k+1}$ might also be non-orderable.

 A connection between orderability and non-displaceability of certain subsets of a contact manifold has already been studied. It was proved by Eliashberg and Polterovich that a contact manifold is orderable if it contains a pair with the stable intersection property (Theorem 2.3.A in \cite{elpo}),
that is a pair $(L,K)$, where $L$ is a pre-Lagrangian and $K$ is either a pre-Lagrangian or a Legendrian, such that the stabilizations of $K$ and $L$ cannot be displaced from each other in $V \times T^*\S^1$, the stabilization of $V$. (For precise definitions see Section 2.2 in \cite{elpo}.)
  If a pre-Lagrangian $L$ paired with itself has stable intersection property, i.e. if it is stably non-displaceable, then it is non-displaceable.  Moreover, Borman and Zapolsky in \cite{BormanZapolsky} proved that any compact contact toric manifold admitting a monotone quasimorphism with the vanishing property is orderable and contains a non-displaceable pre-Lagrangian toric fiber. 
 Theorems \ref{displace sphere} and \ref{displace in boundary of stabilization} imply that neither $\mathbb{S}^{2d-1},$ with $d\geq2$, nor $\T^k \times \S^{2d+k-1}$, with $d\geq 1$, can be equipped with a monotone quasimorphism with the vanishing property
 \footnote{ Here we would like to mention another result related to the non-existence of monotone quasimorphism for the standard sphere: Fraser, Polterovich and Rosen in \cite{FPR} proved that
  every conjugation invariant norm on the identity component of the universal cover of the contactomorphism group of $\mathbb{S}^{2d-1}$, $d\geq2$, must be bounded and discrete, hence equivalent to the trivial norm.}.
 Based on our work and the above results, we conjecture
\begin{conjecture} \label{conjecture}
 If a compact contact toric manifold is not orderable then all its pre-Lagrangian toric fibers are displaceable.
   \end{conjecture}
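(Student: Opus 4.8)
The plan is to prove the implication by contraposition against the Eliashberg--Polterovich criterion, and then to remove the stabilization using the torus symmetry. First I would recall that, by the contrapositive of \cite[Theorem 2.3.A]{elpo}, if $(V,\xi)$ is not orderable then no pair has the stable intersection property; in particular, taking $K=L$ to be any pre-Lagrangian toric fiber, $L$ is \emph{stably displaceable}, i.e. its stabilization can be displaced from itself inside the stabilized manifold $V\times T^*\mathbb{S}^1$. Thus non-orderability hands us for free a conclusion weaker than the desired one: every pre-Lagrangian toric fiber is stably displaceable. The theorem then reduces to upgrading \emph{stable} displaceability of toric fibers to genuine displaceability in $V$ itself.

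To carry out this upgrade I would exploit the $T^d$-symmetry in two ways. Since every contact isotopy is Hamiltonian (as recalled above), non-orderability is equivalent, by \cite[Proposition 2.1.A]{elpo}, to the existence of a contractible loop $\{\varphi_t\}_{t\in\mathbb{S}^1}$ of contactomorphisms generated by a strictly positive Hamiltonian $h_t>0$. Because the $T^d$-action preserves $\xi$ and the invariant contact form $\alpha$, I would first try to replace $\{\varphi_t\}$ by a $T^d$-invariant positive contractible loop; a $T^d$-invariant positive Hamiltonian descends to a function on the orbit space $V/T^d$, whose interior is parametrized by $\mu_{\alpha}(V)\cap \mathrm{Int}\, SV$. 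The induced flow on the orbit space would then move the point representing $\mu_{\alpha}^{-1}(c)$, and positivity of $h_t$ should force this point to drift (in the Reeb direction, which on the symplectization corresponds to dilating the cone coordinate), so that after running the loop the fiber is sent to one over a different value $c'\neq c$, hence displaced. Composing with elements of $T^d$, themselves contact isotopies, gives the additional freedom needed to guarantee disjointness of the two fibers. This would give a direct construction modelled on the explicit sphere isotopy of Giroux and of Eliashberg--Kim--Polterovich used to prove Theorem \ref{first theorem}.

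The main obstacle is precisely the step flagged as open in the introduction: closing the gap between stable displaceability in $V\times T^*\mathbb{S}^1$ and ordinary displaceability in $V$. In the symmetry approach this difficulty reappears as the problem of making the positive contractible loop $T^d$-invariant: averaging the generating Hamiltonian $h_t$ over $T^d$ preserves positivity but does \emph{not} commute with integrating the contact vector field defined by \eqref{ham}, so the averaged Hamiltonian need not generate a loop, let alone a contractible one. Overcoming this would require an equivariant version of the contracting disk of contactomorphisms, or an argument that the stabilizing $T^*\mathbb{S}^1$-factor appearing in \cite{elpo} can be absorbed into the existing toric action when the moment cone is sufficiently large. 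I expect Lerman's classification of compact contact toric manifolds to be essential here, as it separates the manifolds with a free $T^d$-action (where, as the example $T^d\times\mathbb{S}^{d-1}$ suggests, fibers are non-displaceable and the manifold should be orderable) from the ``cone type'' manifolds governed by a good moment cone, for which one can hope to run the sphere-type displacing isotopy fiberwise over the cone.
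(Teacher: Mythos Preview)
The statement you are attempting to prove is a \emph{conjecture} in the paper; the authors do not prove it and explicitly flag the passage from stable displaceability to displaceability as open. So there is no ``paper's proof'' to compare with. Your first reduction is correct and coincides with what the paper observes: by the contrapositive of \cite[Theorem 2.3.A]{elpo}, non-orderability forces every pair, in particular $(L,L)$, to fail the stable intersection property, so every pre-Lagrangian toric fiber is stably displaceable. The paper states precisely this, and notes that if stable displaceability were known to coincide with displaceability for compact pre-Lagrangians, the conjecture would follow.

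Your proposed upgrade, however, has a more serious obstruction than the one you identify. Suppose you succeeded in producing a $T^d$-invariant contact Hamiltonian $h_t$ generating a (positive, contractible) loop. Lift $h_t$ to the $\bb{R}_+$-homogeneous Hamiltonian $H_t$ on the symplectization $SV$. Since $H_t$ is $T^d$-invariant, $\{\Phi_j,H_t\}=X_{\Phi_j}(H_t)=0$ for each component of the contact moment map $\Phi$, so the Hamiltonian flow of $H_t$ preserves every $\Phi$-fiber. Projecting by $\pi\colon SV\to V$, the contact flow of $X_{h_t}$ preserves every $T^d$-orbit, i.e.\ every pre-Lagrangian toric fiber $\mu_\alpha^{-1}(c)$, setwise. (This is the contact analogue of the familiar fact that torus-invariant Hamiltonians in a toric symplectic manifold flow along the moment fibers.) Hence a $T^d$-invariant contact isotopy can \emph{never} displace a toric fiber, and composing with elements of $T^d$ does not help, since these also fix each fiber setwise. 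In particular, the sentence ``positivity of $h_t$ should force this point to drift'' is false: the Reeb flow itself is generated by the $T^d$-invariant positive Hamiltonian $h\equiv 1$, and it keeps every toric fiber invariant (the paper even remarks that the Reeb orbit through $p$ lies in the $T^d$-orbit of $p$).

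So the averaging strategy is not merely technically delicate; it is self-defeating. Any displacing isotopy must genuinely break the torus symmetry, as Giroux's $\tau_t$ does on $\S^{2d-1}$. The honest content of your proposal is therefore exactly the open implication ``stably displaceable $\Rightarrow$ displaceable'' for compact pre-Lagrangians, with no mechanism offered to bridge it; the torus-invariance idea cannot serve as that mechanism.
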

Note that the compactness assumption is crucial. 
The contact toric manifold $(\mathbb{S}^1\times\mathbb{R}^{2d},\ker (d\theta-\sum y_jdx_j))$ is orderable (\cite{Sandon}), even though all pre-Lagrangian toric fibers are displaceable (see Example \ref{example non-compact}). 
Moreover, $(\R^{2d+1}, \ker (dz-\sum y_jdx_j))$ is orderable (\cite{Bhupal}), while
every bounded subset of it is displaceable by a translation in the $z$ direction.

Another difference in rigidity properties of symplectic and contact manifolds is that the contact version of (B) does not hold: a contact toric manifold does not need to contain any displaceable pre-Lagrangian toric fiber.
  \begin{proposition}\label{non-displaceable cosphere} 
  Every pre-Lagrangian toric fiber in the contact toric manifold $\T^{d}\times\mathbb{S}^{d-1},$ $d\geq2$, (co-sphere bundle of $\T^d$), is non-displaceable.
\end{proposition}
This follows from results of Eliashberg, Hofer and Salamon from  \cite{EliashbergHoferSalamon}
where they used Lagrangian Floer homology in the symplectization of contact manifolds to analyze displaceability of graphs of non-vanishing $1$--forms in cosphere bundles, (see Section \ref{section free non-displaceable}). In fact, using Example 2.4.A of \cite{elpo} one can show a stronger result, namely that any pre-Lagrangian toric fiber in $\T^d\times\mathbb{S}^{d-1},d\geq2$, is stably non-displaceable. Due to Theorem 2.3.A of
 \cite{elpo} it follows that $\T^{d}\times\mathbb{S}^{d-1},d\geq2$, is orderable.
 
 \begin{proposition}\label{non-displaceable 3torus}
  Every orbit of the torus action in the contact toric manifold 
  $(\T^3=\S^1_{(\theta)}\times \T^2_{(\theta_1,\theta_2)}, \ker(\cos (2\pi k \theta)\,d\theta_1+\sin (2\pi k \theta)\,d\theta_2)),\,k\geq 1$, is non-displaceable. In particular, every pre-Lagrangian toric fiber of $\T^3$ with one of the above contact forms is non-displaceable.
 \end{proposition}
 
It is interesting to observe that for contact toric manifolds from Propositions \ref{non-displaceable cosphere} and \ref{non-displaceable 3torus} the toric action is free. We believe that the non-existence of displaceable pre-Lagrangian toric fibers is related to the 
fact that the given toric action is free. A toric action on a compact symplectic toric manifold is never free (it is free only on the pre-image of the interior of the moment map image). In fact even a Hamiltonian circle action on a compact symplectic toric manifold is never free: compactness implies that the Hamiltonian moment map must attain its extrema, and the non-degeneracy of the symplectic form implies that the Hamiltonian vector field vanishes at these points, thus these points must be fixed under the circle action. In the contact setting, the Hamiltonian vector field only needs to be in the kernel of $d\alpha$ at the points where the moment map attains its extrema.

All contact toric manifolds admitting a free toric action are listed in Lerman's Classification Theorem (Theorem 2.18 in \cite{Lerman}) which we recall in Section \ref{section contact manifolds}. The only contact toric manifolds with a free toric action that are not included in the statements of Propositions \ref{non-displaceable cosphere} and \ref{non-displaceable 3torus} are the non-trivial $\T^3$-bundles over $\mathbb{S}^2.$ It would be interesting to see if in these cases all pre-Lagrangian toric fibers are non-displaceable.
Finally, in Section \ref{subsection not free displace} we prove the following.
 \begin{theorem}\label{not free displaceable}
 Every compact connected contact toric manifold for which the toric action is not free contains uncountably many displaceable pre-Lagrangian toric fibers.
 \end{theorem}

{\bf Organization.} Section \ref{section contact manifolds} contains background material on contact manifolds and toric actions. In Section \ref{section methods for displacing} we present some methods of displacing pre-Lagrangian toric fibers by analyzing  prequantization maps, contact reduction and contact cutting. In Section \ref{displace not ord} we apply these tools to displace all pre-Lagrangian toric fibers of the non-orderable contact toric manifolds of Propositions \ref{displace sphere} and \ref{displace in boundary of stabilization}. Section \ref{section free non-displaceable} is devoted to study non-displaceability of pre-Lagrangian toric fibers of contact toric manifolds with free toric action. There we prove Propositions \ref{non-displaceable cosphere}, \ref{non-displaceable 3torus} and Theorem \ref{not free displaceable}. 

\section{Background on contact manifolds and torus actions.}\label{section contact manifolds}
\subsection{Contact manifolds and pre-Lagrangian submanifolds.}
Let $(V^{2d-1},\xi)$ be a cooriented contact manifold. Recall that the symplectization of $(V^{2d-1},\xi)$ is the symplectic manifold
$$SV=\{(p,\eta_p) \in T^*V \,| \hskip1mm 
\ker\,\eta_p=\xi_p\hskip1mm\textrm{and}\hskip1mm \eta_p \hskip1mm 
\textrm{agrees with the coorientation of}\hskip1mm \xi_p\}$$
 with the symplectic form $d\lambda_{|SV}$,  where $\lambda$ is the canonical Liouville 1-form on $T^*V.$
 The standard $\mathbb{R}_+$-action on $SV$ defined by $t\cdot(p,\eta_p)\mapsto(p,t\eta_p)$ makes $SV$ a principal $\mathbb{R}_+$-bundle over $V$.
 Any contact form for $\xi$ is a section of this bundle.
  We denote by $\pi:SV\rightarrow V$ the projection $\pi(p,\eta_p)=p.$
 A submanifold $L^d\subset (V^{2d-1},\xi)$ is said to be a \textbf{pre-Lagrangian} if it is the diffeomorphic image under $\pi$ of some Lagrangian submanifold $\widetilde{L}\subset SV.$ The notion of pre-Lagrangian submanifold and the related displaceability problems have been first studied in \cite{EliashbergHoferSalamon}.

We will analyze the problem of displaceability of pre-Lagrangians under contact isotopies. 
In symplectic topology every (possibly time-dependent) function $h_t : M\mapsto \R$ on a symplectic manifold $(M,\omega)$ induces a symplectic isotopy $\varphi_t$, which is defined to be the flow of the vector field $X_{h_t}$ determined by the relation $X_{h_t}\lrcorner\omega=dh_t.$ The isotopy $\{\varphi_t\}$ is said to be a Hamiltonian isotopy, with Hamiltonian function $h_t.$  A Lagrangian $L$ in a symplectic manifold $(M,\omega)$ is said to be non-displaceable if $\varphi_1(L)\cap L\neq\emptyset$ for every Hamiltonian isotopy 
 $\{\varphi_t\}_{t \in [0,1]}$ on $M$ with $\varphi_0=id$. In the contact setting, every (possibly time-dependent) function $h_t : V \mapsto \R$ on a contact manifold $(V, \xi = \ker \alpha)$ induces a contact isotopy ${\varphi_t}$, which is defined to be the flow of the vector field $X_{h_t}$ determined by the relation
 \begin{equation}\label{ham}
 \alpha(X_{h_t})=h_t\hskip2mm\textrm{and}\hskip2mm X_{h_t}\lrcorner d{\alpha}=dh_t(R_{\alpha})\alpha-dh_t.
 \end{equation}
 Here $R_{\alpha}$ denotes the Reeb vector field associated to a contact form $\alpha,$ that is, the unique vector field  such that
$$R_{\alpha}\lrcorner d\alpha=0 \hskip2mm\mathrm{and}\hskip2mm \alpha(R_{\alpha})=1.$$
 Note that $\{\varphi_t\}$ depends on the choice of a contact form $\alpha$. The function $h_t$ is then said to be the Hamiltonian function of the contact isotopy $\{\varphi_t\}$ with respect to the contact form $\alpha$. Contrary to the symplectic case, any contact isotopy is induced by a Hamiltonian function (which is defined uniquely by $h_t = \alpha(X_{h_t})$, where $X_t$ is the vector field generating the isotopy).
Translating the notion of non-displaceable Lagrangian fiber in a symplectic manifold to the contact setting we obtain the following definition. 
 A pre-Lagrangian $L\subset V$ is called \textbf{non-displaceable} if  $\varphi_1(L)\cap L\neq\emptyset$ for every contact isotopy 
 $\{\varphi_t\}_{t\in[0,1]}$ on $V$, with $\varphi_0=id$. Otherwise, it is called displaceable.
\subsection{Contact toric manifolds.}
A co-oriented contact manifold $(V^{2d-1},\xi)$ with an effective action of the torus $\T^d$ that preserves the contact structure $\xi$ is called a {\bf contact toric manifold}. 
As $\T^d$ is compact one can always choose a $\T^d$-invariant contact form $\alpha$ for $\xi$. For such a $\T^d$-invariant contact form $\alpha$, the \textbf{$\alpha$-moment map} $\mu_{\alpha} \colon V\rightarrow (\mathfrak{t}^d)^*$  is defined by
 $$\langle\mu_{\alpha}(p),X\rangle=\alpha_p(\underline{X}_p),$$
 where $\underline{X}$ is the vector field on $V$ generated by $X\in\mathfrak{t}^d$ and $\langle\cdot,\cdot\rangle$ denotes the natural pairing between $\mathfrak{t}^d$ and $(\mathfrak{t}^d)^*$. If $V$ is equipped with a contact toric action of $\T^d$, the lift of this action to $T^*V$ is symplectic and keeps $SV$ invariant, making $SV$ a symplectic toric manifold. The corresponding moment map $\Phi:SV\rightarrow(\mathfrak{t}^d)^*$, called the {\bf contact moment map}, is given by
 $$\langle\Phi(p,\eta_p),X\rangle=\eta_p(\underline{X}_p)$$
 and does not depend on the choice of a contact form. The {\bf moment cone} is the image of the contact moment map. 
One can identify the Lie algebra $\mathfrak{t}^d$ of $\T^d$ with $\mathbb{R}^d$ by fixing a splitting of $\T^d$ into a product of circles and an identification $Lie(\S^1)\cong \R$, and view the moment maps $\mu_{\alpha}$ and  $\Phi$ as maps to $(\bb{R}^d)^*$. 
We use the convention  $\S^1\cong\mathbb{R}/\mathbb{Z}$. 
Note that switching to a different convention would change $\mu_{\alpha}(V)$, but not the moment cone $\Phi(SV)$, as the cone is invariant under rescaling in the radial direction. Changing the splitting of $\T^d$, i.e. reparametrizing the action, would result in applying a $GL(d,\Z)$ transformation to $\mu_{\alpha}(V)$ and $\Phi(SV)$ (see Example \ref{example sphere is a prequantization}).
 
 For any $\T^d$-invariant contact form $\alpha$ it holds that $\Phi\circ\alpha=\mu_{\alpha}$ (Proposition 2.8 in \cite{Lerman}) and, for any $c \in \mu_{\alpha}(V) \cap Int \,\Phi(SV),$ 
 the restriction of the projection $\pi \colon SV \rightarrow V$ maps diffeomorphically
 $ \Phi^{-1}(c) $ onto $ \mu_{\alpha}^{-1}(c)$. Therefore generic fibers of $ \mu_{\alpha}$ are pre-Lagrangians. We call them the {\bf pre-Lagrangian toric fibers}. 
 Connected components of the fibers of $\mu_{\alpha}$ are $\T^d$-orbits (see Lemma 3.16 in \cite{Lerman}).
 If $\dim V>3$ then, by Theorem 4.2 in \cite{Lerman}, all fibers of $\mu_{\alpha}$ are connected. 
 In fact the only contact toric manifolds with disconnected pre-Lagrangian fibers are
 $(\T^3, \ker \alpha_k)$ described below, with $k>1$. (Each fiber has exactly $k$ connected components).
 
The classification of compact contact toric manifolds initiated by Banyaga-Molino \cite{BanyagaMolino1}, \cite{BanyagaMolino2} and Boyer-Galicki \cite{BoyerGalicki} was concluded by Lerman in \cite{Lerman}.
 Compact connected contact toric manifolds $(V, \xi)$ are classified as follows (Theorem 2.18 in \cite{Lerman}):
\begin{itemize}\label{classification}
\item Suppose $\dim V= 3$ and the torus action is free. Then $V$ is diffeomorphic to $\T^3=\S^1_{(\theta)}\times \T^2_{(\theta_1,\theta_2)}$ with the contact form $\alpha_k= \cos (kt)\,d\theta_1 +\sin (kt) \, d\theta_2$, for some $k \geq1$. The moment cone is $\R^2$. 
Note that $\T^3$ is the cosphere bundle of $\T^2$ and the standard contact structure of the cosphere bundle corresponds to $k=1$.
\item Suppose $\dim V= 3$ and the torus action is not free. Then $V$ is diffeomorphic to $\S^3,$ $\S^1 \times \S^2$ or a lens space. There are various possible toric actions and various possible contact structures (including overtwisted ones). For details see \cite{Lerman}.
\item Suppose $\dim V> 3$ and the torus action is free. Then $V$ is a principal $\T^{d}$-bundle over $\S^{d-1}$, and the moment cone is the whole $\R^{d}$.  Each such principal bundle has a unique $\T^d$-invariant contact structure making it a contact toric manifold.
Since principal $\T^d$-bundles over a manifold are in one-to-one correspondence
with the second cohomology classes of the manifold with coefficients in $\mathbb{Z}^d$ and since $H^2(\mathbb{S}^{d-1},\mathbb{Z}^d) = 0$
for $d-1\neq2,$ it follows that when $\dim V=2d-1>5,$ $V$ must be the trivial bundle $\T^d\times\mathbb{S}^{d-1}$.\item Suppose $\dim V>3$ and the torus action is not free. Then the contact toric manifold is uniquely determined by its convex moment cone,
up to $GL(2d-1,\Z)$ transformations (corresponding to changing a splitting of a torus into a product of circles). When the moment cone is strictly convex then $V$ is of Reeb type, i.e. $V$ admits a contact form whose Reeb vector field generates a circle subaction of the toric action. Otherwise, i.e. when the moment cone is convex but not strictly convex, $V$ is $\T^k\times\S^{2d+k-1}.$
\end{itemize}
 \subsection{Orderability.}  Eliashberg and Polterovich in \cite{elpo} defined a relation $\preceq$ on the universal cover of the identity component of the contactomorphism group, $\widetilde{Cont}_0(V,\xi)$:  two elements $[\{\phi_t\}], [\{\psi_t\}] \in \widetilde{Cont}_0(V,\xi)$ satisfy $[\{\phi_t\}] \preceq [\{\psi_t\}]$
if $[\{\psi_t\}] \circ [\{\phi_t\}]^{-1}$ can be represented by a non-negative contact isotopy, 
i.e. a contact isotopy that moves every point of $V$ in a direction 
positively transverse or tangent to $\xi$
(equivalently, a contact isotopy that is generated
by a non-negative contact Hamiltonian).
 This relation is always reflexive and transitive. If it is also anti-symmetric then it defines a bi-invariant partial order on $\widetilde{Cont}_0(V,\xi)$ and the contact manifold $(V,\xi) $ is called {\bf orderable}.
   Equivalently, according to Proposition 2.1.A in \cite{elpo}, a contact manifold is orderable if there are no contractible loops of contactomorphisms generated by a strictly positive contact Hamiltonian.
Eliashberg, Kim and Polterovich in Theorem 1.16 in \cite{EKimP} showed that
the ideal contact boundary of a product $M\times\mathbb{C}^d$ of a Liouville manifold $M$ and $(\mathbb{C}^d, \omega=\frac{i}{2}\sum dz\wedge d\bar{z})$ is not orderable for $d\geq 2$. 
Two important cases which we consider here are when $M$ is a point  and when $M=T^*\mathbb{S}^k$. In these cases the ideal contact boundary of $M$ stabilized $d$ times is, respectively, the standard contact spheres $\S^{2d-1}$ and the manifolds $\T^k \times \S^{2d+k-1}$ with the contact form which is described in detail in Section \ref{section displace by cuts}. Hence, $\S^{2d-1}$ and $\T^k \times \S^{2d+k-1}$ are non-orderable if $d\geq 2.$  In Section  \ref{displace not ord} we prove that all their pre-Lagrangian toric fibers are displaceable.
\section{Methods for displacing. }\label{section methods for displacing}
In this section we describe some methods of displacing pre-Lagrangian toric fibers in contact toric manifolds. All these methods are obtained by a similar pattern: we look at various ways of constructing a new manifold from a given one (prequantization, contact reduction, contact cut) and deduce relations between displaceability properties of corresponding subsets of the manifold we started with and of the manifold we constructed. 
\subsection{Prequantization of a symplectic toric manifold and displaceability}\label{section prequantization}
Let $(M,\omega)$ be a symplectic manifold such that the cohomology class $[\omega]$ is integral.
 The \textbf{prequantization} of $(M,\omega)$ is the principal $\mathbb{S}^1-$bundle  $\pi:V\rightarrow M$ with Euler class $[\omega]$. There is a connection 1-form $\alpha$ on $V$ such that
 $\pi^*\omega=d\alpha$, which is also a contact form on $V$, and thus $(V,\xi=\ker \alpha)$ is a contact manifold. This construction is due to Boothby-Wang \cite{BoothbyWang} (see also \cite[Section 7.2.]{Geiges}). 
 The orbits of the Reeb vector field $R_{\alpha}$ associated to the contact $1$-form $\alpha$ are the fibers of the bundle.
If $V$ is the prequantization of $(M,\omega)$ then for the subgroup $\mathbb{Z}_k=\{e^{2\pi i\,l/k};\,l=0,\ldots, k-1\}\subset\mathbb{S}^1$ the quotient submanifold $V/\mathbb{Z}_k$ is the prequantization of $(M,k\omega)$.

  If $(M^{2d},\omega)$ is a compact symplectic toric manifold then the $\T^d$-action lifts to a contact $\T^d$-action on $V$. Together with the $\S^1$-action given by the Reeb flow this gives a $\T^{d+1}$-action on the prequantization $(V^{2d+1},\xi)$ making it a contact toric manifold (see \cite{Lerman2}). The pre-image under $\pi$ of a Lagrangian toric fiber in $M$ is a pre-Lagrangian toric fiber in $V.$ Moreover, the image of the contact moment map $\Phi \colon SV \rightarrow (\bb{R}^d)^* \times (\bb{R})^*$ is a cone over the moment map image $\Delta$ of $M$, namely it is
  $$C=\{a(x,1) \in (\bb{R}^d)^* \times (\bb{R})^*\,|\, x \in \Delta, a \in \bb{R}_{> 0}\}.$$ Symplectic reduction of $SV$ with respect to the circle $\{1\} \times \S^1 \subset \T^{d+1}$ taken at level $1$ gives back the symplectic toric manifold $(M,\omega)$ (Lemma 3.7 in \cite{Lerman2}).

\begin{example}\label{example sphere is a prequantization} 
The standard contact sphere, $(\mathbb{S}^{2d-1},\ker \alpha_{st})$, $ \alpha_{st}=\frac{i}{4}\sum_{i=1}^d(z_id\bar{z}_i-\overline{z}_idz_i)$,
 is the prequantization of the complex projective space $(\mathbb{CP}^{d-1}, \frac{1}{\pi}\omega_{FS})$
  as the first Chern class of the Hopf fibration $\mathbb{S}^{2d-1}\rightarrow\mathbb{CP}^{d-1}$ is  $\frac{1}{\pi}\omega_{FS}$.
 Performing the above construction for the standard $\T^{d-1}$-action on $(\mathbb{CP}^{d-1},\omega_{FS})$, one equips the prequantization space  $\mathbb{S}^{2d-1}$ with a toric $\T^d$-action
 $$(t_1,\ldots,t_d)\ast (z_0,\ldots,z_{d-1})=(t_dz_0, t_dt_1z_1,\ldots, t_dt_{d-1} z_{d-1})$$
 (the Reeb flow gives the diagonal circle action).
 The corresponding moment cone is spanned by the directions $e_1+e_{d}, e_2+e_{d},\ldots, e_{d-1}+e_{d},e_{d},$ where $e_i,$ $i=1,\ldots d$ are the coordinate axes in $\R^d$.
Observe that the resulting action differs by a reparametrization from the standard action of $\T^d$ on $\mathbb{S}^{2d-1}$ induced from the
$\T^d$-action  on $\mathbb{C}^d$, where each circle in $\T^d$ rotates the corresponding copy of $\C$ with speed $1$
(see Section \ref{contact sphere}).
This is why the above moment cone differs by a $GL(d,\Z)$-transformation from the moment cone of the standard action of $\T^d$ on $\mathbb{S}^{2d-1}$ (which
is spanned by the directions $e_1,\ldots, e_d$; Section \ref{contact sphere}).
Furthermore,
   the real projective space $\mathbb{RP}^{2d-1}=\mathbb{S}^{2d-1}/\mathbb{Z}_2$, and more generally the lens spaces $L_p^{2d-1}=\mathbb{S}^{2d-1}/\mathbb{Z}_p,$ $ p\in\mathbb{N}$,
  with contact forms induced by $\alpha_{st}$ are prequantizations of $(\mathbb{CP}^{d-1},\frac{p}{\pi}\omega_{FS})$. Hence, they are also contact toric manifolds, 
  and their moment cones are spanned by the directions
  $pe_1+e_{d}, pe_2+e_{d},\ldots, pe_{d-1}+e_{d},e_{d}.$
 \end{example}
We now explain a connection between non-displaceability in a symplectic toric manifold $(M,\omega)$ and in its prequantization $(V,\xi)$. Note that if $L' \subset M$ is a Lagrangian submanifold then $\pi^{-1}(L')\subset V$ is a pre-Lagrangian submanifold. By $Ham(M,\omega)$ we denote the group of Hamiltonian diffeomorphisms on $(M,\omega)$ and by
$Cont_0(V,\xi)$ the identity component of the contactomorphisms group of $(V,\xi).$ 
 \begin{lemma}\label{lift}(\emph{Lifting property for prequantization}) Let $\varphi\in Ham(M,\omega).$ Then, there is $\widetilde{\varphi}\in Cont_0(V,\xi)$ such that
 $\pi\circ\widetilde{\varphi}=\varphi\circ\pi$.
 \end{lemma}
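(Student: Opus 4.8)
The plan is to construct $\widetilde{\varphi}$ by lifting the generating Hamiltonian of $\varphi$ and integrating. First I would recall that a Hamiltonian isotopy $\varphi = \varphi_1$ on $(M,\omega)$ is $\varphi_t$ for $t\in[0,1]$, generated by a time-dependent Hamiltonian $H_t \colon M \rightarrow \mathbb{R}$ via the vector field $Y_t$ determined by $Y_t \lrcorner \omega = -dH_t$. The key point is that the connection form $\alpha$ on $V$ (with $\pi^*\omega = d\alpha$ and Reeb field $R_\alpha$ generating the $\mathbb{S}^1$-fiber action, $d\pi(R_\alpha)=0$) lets us transport this data upstairs. I would set $h_t := \pi^* H_t = H_t \circ \pi \colon V \rightarrow \mathbb{R}$, a $T^d$-invariant (in particular $R_\alpha$-invariant, hence basic) function on $V$, and let $\varphi_t^h$ be the contact Hamiltonian isotopy it generates via the defining equations \eqref{ham}: $\alpha(X_{h_t}) = h_t$ and $X_{h_t} \lrcorner d\alpha = dh_t(R_\alpha)\,\alpha - dh_t$. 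Since $h_t$ is basic, $dh_t(R_\alpha) = 0$, so the second equation simplifies to $X_{h_t}\lrcorner d\alpha = -dh_t = -\pi^*(dH_t)$. Then $\widetilde{\varphi} := \varphi_1^h$ is the desired element of $Cont_0(V,\xi)$.

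The heart of the argument is verifying $\pi \circ \widetilde{\varphi} = \varphi \circ \pi$, which I would get by showing $d\pi(X_{h_t}) = Y_t \circ \pi$ pointwise, i.e. that $X_{h_t}$ is $\pi$-related to $Y_t$; integrating this relation over $t\in[0,1]$ (both isotopies starting at the identity, $\pi$ being equivariant under the flows) yields $\pi \circ \varphi_t^h = \varphi_t \circ \pi$ for all $t$, and $t=1$ is the claim. To check the $\pi$-relatedness, decompose $X_{h_t}$ along the connection: write $X_{h_t} = Z_t + f_t R_\alpha$ where $Z_t$ is the horizontal lift (with respect to the connection $\alpha$) of some vector field on $M$ and $f_t = \alpha(X_{h_t}) = h_t$. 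Applying $d\pi$ kills the $R_\alpha$-part, so I must show the horizontal part $Z_t$ projects to $Y_t$. Contract the second equation of \eqref{ham} with a horizontal lift $W$ of an arbitrary vector field $w$ on $M$: $d\alpha(X_{h_t}, W) = -dh_t(W)$. On horizontal vectors $d\alpha = \pi^*\omega$, and the $f_t R_\alpha$ component contributes $d\alpha(R_\alpha, W) = 0$, so the left side is $\omega(d\pi(Z_t), w)\circ\pi$; the right side is $-dH_t(w)\circ\pi = (Y_t \lrcorner \omega)(w) \circ \pi$. Since $\omega$ is nondegenerate and $w$ arbitrary, $d\pi(Z_t) = Y_t\circ\pi$, hence $d\pi(X_{h_t}) = Y_t \circ \pi$, as needed.

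The main obstacle — really the only subtle point — is the bookkeeping ensuring the horizontal/vertical split of $X_{h_t}$ is consistent with the sign conventions in \eqref{ham} and in the definition of a Hamiltonian isotopy on $M$, and confirming that $dh_t(R_\alpha) = 0$ genuinely holds because $h_t$ is pulled back from the base (so that the $\alpha$-term in the second equation of \eqref{ham} drops and the projection computation is clean). One should also note $\widetilde{\varphi} \in Cont_0(V,\xi)$ by construction, as it is the time-one map of a contact Hamiltonian flow starting at the identity, and that the isotopy $\widetilde{\varphi}_t = \varphi_t^h$ itself is the lift, not just its endpoint. The rest is a routine ODE-uniqueness argument: $\pi \circ \varphi_t^h$ and $\varphi_t \circ \pi$ are both integral curves of $Y_t$ (in the sense of isotopies) through the same initial map $\pi$, hence equal.
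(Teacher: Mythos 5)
Your proposal is correct and follows essentially the same route as the paper: lift the Hamiltonian by pullback, note $dh_t(R_\alpha)=0$, show the contact Hamiltonian vector field is $\pi$-related to the symplectic one via $\pi^*\omega=d\alpha$ and nondegeneracy of $\omega$, and conclude by uniqueness of flows. The only cosmetic difference is that you split $X_{h_t}$ into horizontal and vertical parts, whereas the paper simply contracts against an arbitrary $d\pi$-preimage of a test vector; the computation is the same.
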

 \begin{proof} Let $\varphi_t$ be a Hamiltonian isotopy such that $\varphi=\varphi_1$ and let $h_t:M\rightarrow\mathbb{R}$ be the corresponding time dependent Hamiltonian function. Let $\widetilde{\varphi}_t$ be the contact isotopy generated by $\widetilde{h}_t=h_t\circ\pi$. Then $\widetilde{\varphi}=\widetilde{\varphi}_1$ has the desired properties. 

 \end{proof}
  What follows is an analogue of the result of Abreu and Macarini on preserving non-displaceability under symplectic reduction (Proposition 3.2 in \cite{abreumacarini}). The proof immediately follows from Lemma \ref{lift}.
  \begin{proposition}\label{lift.preq} Let $(V,\xi)$ be a contact toric manifold which is the prequantization of a symplectic toric manifold $(M,\omega).$ If a Lagrangian toric fiber $L\subset M$ is displaceable then so is the pre-Lagrangian toric fiber $\pi^{-1}(L)\subset V.$
 \end{proposition}
\begin{example}\label{example non-compact}
The contact toric manifold $(\S^1\times\R^{2d},\ker(d\theta-\sum_{j=1}^dy_jdx_j))$ is the prequantization of the symplectic toric manifold $(\R^{2d},\sum_{j=1}^ddx_j\wedge dy_j)$ 
(with the standard $\T^d$-action on $\R^{2d}\cong \C^d$).
Every Lagrangian toric fiber in $\R^{2d}$ is displaceable by Hamiltonian isotopies: for example, by an appropriate translation in one of the coordinates. 
Thus, by Proposition \ref{lift.preq}, every pre-Lagrangian toric fiber in $\S^1\times\R^{2d}$ is also displaceable.
\end{example}
\subsection{The method of probes.}\label{method of probes}
The method of probes was introduced by McDuff \cite{mcduff} and serves to displace some Lagrangian toric fibers in symplectic toric manifolds. We briefly recall it here, since the results of Section \ref{section displace by cuts} will be proved by combining Proposition \ref{lift.preq} with displaceability results in symplectic toric manifolds obtained via this method.

Let  $\Delta =\mu(M) \subset \R^n$ be the Delzant polytope corresponding to some symplectic toric manifold $M^{2n}$ with moment map $\mu$.
Take any facet $F$ of $\Delta \subset \R^n$ and denote its inward normal by $\eta_F$. An integral vector $\lambda \in \Z^n$ is called integrally transverse to $F$ if $|\langle \lambda, \eta_F \rangle |=1$. The {\bf probe} $p_{F, \lambda}(w)$ with direction $\lambda \in \Z^n$  and initial point $w \in F$ is the half open line segment consisting of $w$ and all the points in $\ii \Delta$ that lie on the ray from $w$ in the direction $\lambda$. McDuff in Lemma 2.4 of \cite[Lemma 2.4]{mcduff} proved that if $w \in \ii F$ and $u \in \ii \Delta$ is any point which lies on  the probe $p_{F, \lambda}(w)$, less than halfway along it, then the Lagrangian toric fiber in $M$ corresponding to $u$ is displaceable. Moreover, this fiber can be displaced by an isotopy of $M$ supported in a compact subset of $\mu^{-1}(p_{F, \lambda}(w))$.

Using this method McDuff \cite{mcduff} and Abreu-Borman-McDuff \cite{abreubormanmcduff} showed that any compact symplectic toric manifold contains uncountably many displaceable Lagrangian toric fibers. Combining this with Proposition \ref{lift.preq} we see that any contact toric manifold $(V,\xi)$ which is the prequantization of a compact symplectic toric manifold contains uncountably many displaceable pre-Lagrangian toric fibers.

In particular this implies that a contact toric manifold for which all pre-Lagrangian toric fibers are non-displaceable (for instance, $\T^k\times\mathbb{S}^{2d+k-1}$ as we will see in \ref{section displace by cuts}) cannot be the prequantization of a compact symplectic toric manifold.


 \begin{remark}\label{stem}
 According to the definition given by Entov and Polterovich in \cite{entpol}, a Lagrangian toric fiber in a symplectic toric manifold is a {\bf stem} if any other Lagrangian toric fiber is displaceable. 
 Translating to the contact setting one obtains the following definition:
a pre-Lagrangian toric fiber is a (contact) stem if every other pre-Lagrangian toric fiber is displaceable. From Proposition \ref{lift.preq} we see that if $L\subset M$ is a stem, then $\pi^{-1}(L)\subset V$ is a stem. While
 in the symplectic setting a stem, if it exists, is unique and non-displaceable  (\cite{entpol}), in the contact setting this is not necessarily true. In $\mathbb{S}^{2d-1}$ and $\T^k\times \S^{2d+k-1},$ $d\geq2$, every pre-Lagrangian toric fiber is a stem and none of them is non-displaceable (see Section \ref{displace not ord}). However, as was proved by Borman and Zapolsky, in the family of contact toric manifolds admitting a monotone quasimorphism with the vanishing property stems behave as in the symplectic case: if a stem exists then it is unique and non-displaceable (see Corollary 1.19 and Corollary 1.11 in \cite{BormanZapolsky}). The prequantization of any even monotone symplectic toric manifold (with the symplectic form scaled appropriately) admits such a quasimorphism (see Theorem 1.3. in \cite{BormanZapolsky}). \eor
 \end{remark}
 \begin{example}\label{example displace via prequantization} The central fiber in $\mathbb{CP}^{d-1}$, the Clifford torus
$$\T_{\mathbb{CP}}=\{[z_1,\ldots,z_{d}]\in\mathbb{CP}^{d-1}|\hskip1mm |z_1|^2=\cdots=|z_{d}|^2\}$$ is proved to be non-displaceable (Cho-Poddar  \cite{ChoPoddar}). On the other hand, using the method of probes, it can be shown that all other Lagrangian toric fibers in $\mathbb{CP}^{d-1}$ are displaceable. Therefore the Clifford torus is a stem. By Remark \ref{stem} the pre-images of the Clifford torus in $\mathbb{S}^{2d-1}$, $\mathbb{RP}^{2d-1}$ and $L_p^{2d-1}=\mathbb{S}^{2d-1}/ \bb{Z}_p$  (under the prequantization map) are also stems.  However their displaceability properties are very different. The real projective space admits a monotone quasimorphism with the vanishing property and therefore its stem is non-displaceable \cite{BormanZapolsky}. It is expected that all lens spaces also admit such quasimorphism (\cite{GKPS} in preparation). That would imply that the preimage of the Clifford torus in any $L_p^{2d-1}$ is also non-displaceable.
In the case of $\mathbb{S}^{2d-1},$ the preimage of the Clifford torus is displaceable as shown in Proposition \ref{displace sphere}. Absence of a non-displaceable pre-Lagrangian toric fiber also implies that $\mathbb{S}^{2d-1}$ does not admit a monotone quasimorphism with the vanishing property (see Theorem 1.14. in \cite{BormanZapolsky}).
\end{example}
\subsection{Contact reduction of a contact toric manifold and displaceability}\label{section contact reduction}
In this Section we explain how one can use contact reduction to deduce certain results about (non)-displaceability.
This is done for the sake of completeness, as in this article we are not applying this method.

We first recall the notion of a
\textbf{contact reduction} (for more details see \cite{Albert} or Section 7.7. in \cite{Geiges}). 
Suppose that a compact Lie group $G$ acts on a contact manifold $(\widetilde{V},\widetilde{\xi}=\ker\widetilde{\alpha})$ preserving the contact form $\widetilde{\alpha}$ and let $\mu_G$ be the corresponding moment map.
Assume moreover that $0\in\mathfrak{g}^{\ast}$ is a regular value and that $G$ acts freely and properly on the level ${\mu_G}^{-1}(0).$ 
Let $\rho:{\mu_G}^{-1}(0)\rightarrow{\mu_G}^{-1}(0)/G$ be the quotient map. 
The contact form $\widetilde{\alpha}$ naturally induces a contact form $\alpha$ on $V$ such that $\rho^*\alpha=\widetilde{\alpha}$ on ${\mu_G}^{-1}(0).$
Moreover, if $ \widetilde{V}^{2d+1}$ is a contact toric manifold with a toric $\T^{d+1}$-action, 
and $G=\S^1$ is a subgroup of $\T^{d+1}$,
then the residual torus $\T^{d+1}/G$ acts on the reduced space $V$, turning it into a contact toric manifold.

The results of Abreu and Macarini (\cite{abreumacarini}),
establishing a connection between rigidity of Lagrangian toric fibers in a symplectic manifold and in its reduction,
can easily be translated to the contact setting. Proposition \ref{displace reduction} follows immediately from the following lemma, whose proof is omitted since it is analogous to the one in the symplectic case  (see Section 3 in \cite{abreumacarini}).
 \begin{lemma}\label{lift reduction}(\emph{Lifting property for contact reduction}) Let $\varphi\in \mathrm{Cont}_0(V,\xi)$. Then, there is $\widetilde{\varphi}\in\mathrm{Cont}_0(\widetilde{V},\widetilde{\xi})$ such that
 $\rho\circ\widetilde{\varphi}=\varphi\circ\rho$.
 \end{lemma}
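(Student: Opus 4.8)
The plan is to mimic the prequantization case (Lemma \ref{lift}), replacing "pull back along $\pi$" by "pull back along $\rho$ and extend $G$-invariantly to all of $\widetilde V$". First I would take a contact isotopy $\varphi_t \in \mathrm{Cont}_0(V,\xi)$ with $\varphi_1 = \varphi$, and let $h_t \colon V \rightarrow \mathbb{R}$ be its contact Hamiltonian with respect to $\alpha$, so $h_t = \alpha(X_t)$ where $X_t \circ \varphi_t = \tfrac{d\varphi_t}{dt}$. The function $h_t \circ \rho$ is defined only on the coisotropic submanifold $\mu_G^{-1}(0) \subset \widetilde V$; since the $G$-action is free and proper there, $\mu_G^{-1}(0)/G = V$, and I would choose a $G$-invariant tubular neighborhood of $\mu_G^{-1}(0)$ and a $G$-invariant cutoff to produce a function $\widetilde h_t \colon \widetilde V \rightarrow \mathbb{R}$ that restricts to $h_t \circ \rho$ on $\mu_G^{-1}(0)$, is $G$-invariant, and is compactly supported (or at least such that its flow is complete — automatic if $\widetilde V$ is compact). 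Let $\widetilde\varphi_t$ be the contact isotopy of $(\widetilde V,\widetilde\alpha)$ generated by $\widetilde h_t$.

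The key step is then to show $\widetilde\varphi_t$ preserves $\mu_G^{-1}(0)$ and descends to $\varphi_t$. Because $\widetilde h_t$ is $G$-invariant and $G$ preserves $\widetilde\alpha$, the contact vector field $X_{\widetilde h_t}$ is $G$-invariant; one checks from the defining equations (\ref{ham}) that along $\mu_G^{-1}(0)$ the vector field $X_{\widetilde h_t}$ is tangent to $\mu_G^{-1}(0)$ — this is where one uses that $\widetilde h_t$ vanishes in the directions normal to the level set only after the $G$-invariant choice, together with the fact that the components of $\mu_G$ are themselves contact Hamiltonians for the $G$-action, so their Poisson-type brackets with the $G$-invariant $\widetilde h_t$ vanish on $\mu_G^{-1}(0)$. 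Hence $\widetilde\varphi_t\big(\mu_G^{-1}(0)\big) = \mu_G^{-1}(0)$, and the restriction $\widetilde\varphi_t|_{\mu_G^{-1}(0)}$ is $G$-equivariant, so it induces a diffeomorphism $\psi_t$ of $V$ with $\rho \circ \widetilde\varphi_t = \psi_t \circ \rho$. Finally, using $\rho^*\alpha = \widetilde\alpha$ on $\mu_G^{-1}(0)$ and the naturality of the defining equations (\ref{ham}), $\psi_t$ is the contact isotopy of $(V,\alpha)$ generated by $h_t$, so by uniqueness of flows $\psi_t = \varphi_t$, giving $\rho \circ \widetilde\varphi = \varphi \circ \rho$ with $\widetilde\varphi = \widetilde\varphi_1 \in \mathrm{Cont}_0(\widetilde V,\widetilde\xi)$.

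The main obstacle is the tangency claim: verifying that the contact vector field of a $G$-invariant Hamiltonian stays tangent to the zero level of the contact moment map. In the symplectic setting this is the standard fact that $X_{\widetilde h}$ is tangent to $\mu_G^{-1}(0)$ when $\widetilde h$ is $G$-invariant; in the contact setting the extra term $dh_t(R_\alpha)\alpha$ in (\ref{ham}) and the possible non-invariance of the Reeb field require a small but careful computation with the contact moment map relations. Since this is entirely parallel to the symplectic reduction argument, I would carry out the bookkeeping following \cite[Section 3]{abreumacarini} and omit the routine verifications, stating only the $G$-invariant cutoff construction and the tangency lemma explicitly.
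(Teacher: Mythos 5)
Your proposal is correct and is exactly the argument the paper has in mind: the paper omits the proof, stating only that it is the contact analogue of the symplectic reduction case and citing \cite[Section 3]{abreumacarini}, and your outline (pull back the Hamiltonian, extend $G$-invariantly, check tangency to $\mu_G^{-1}(0)$, descend) is the standard adaptation. The tangency step you flag does go through cleanly: for $G$-invariant $\widetilde h_t$ one computes $d\langle\mu_G,X\rangle(X_{\widetilde h_t})=d\widetilde h_t(R_{\widetilde\alpha})\,\langle\mu_G,X\rangle-d\widetilde h_t(\underline{X})$, and both terms vanish on $\mu_G^{-1}(0)$ (the Reeb field of the invariant form is automatically $G$-invariant, so that worry is moot).
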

\begin{proposition}\label{displace reduction} If $L\subset V$ is a displaceable pre-Lagrangian toric fiber, then $\widetilde{L}=\rho^{-1}(L)\subset\widetilde{V}$ is a displaceable pre-Lagrangian toric fiber.
\end{proposition}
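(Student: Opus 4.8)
The plan is to deduce Proposition~\ref{displace reduction} directly from Lemma~\ref{lift reduction} together with the commutative diagram for contact reduction. First I would record the set-theoretic fact that a displacing isotopy pulls back through a surjection: if $\rho \colon {\mu_G}^{-1}(0) \to V$ is the quotient projection and $\varphi \in \mathrm{Cont}_0(V,\xi)$ satisfies $\varphi(L) \cap L = \emptyset$, then for the lift $\widetilde{\varphi} \in \mathrm{Cont}_0(\widetilde V, \widetilde\xi)$ provided by Lemma~\ref{lift reduction} we have $\rho \circ \widetilde\varphi = \varphi \circ \rho$ on ${\mu_G}^{-1}(0)$. Since $\widetilde L = \rho^{-1}(L)$, applying $\rho$ gives $\rho(\widetilde\varphi(\widetilde L)) = \varphi(\rho(\widetilde L)) = \varphi(L)$, which is disjoint from $L = \rho(\widetilde L)$; hence $\widetilde\varphi(\widetilde L)$ and $\widetilde L$ have disjoint images under $\rho$ and are therefore disjoint themselves. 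One subtlety to address: the lifted isotopy $\widetilde\varphi$ is defined on all of $\widetilde V$, not just on ${\mu_G}^{-1}(0)$; but $\widetilde L \subset {\mu_G}^{-1}(0)$ by construction (as the $\rho$-preimage of a fiber in the quotient), and the argument above only uses the relation $\rho\circ\widetilde\varphi = \varphi\circ\rho$ on the zero level set, so this causes no trouble.

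Second, I would justify why $\widetilde L = \rho^{-1}(L)$ is again a pre-Lagrangian toric fiber in $\widetilde V$, so that the conclusion is stated in the right category. This is exactly the observation made just before Example~\ref{example contact reduction}: from the commutativity of the reduction diagram, $L = \mu_\alpha^{-1}(c)$ for some $c \in \Delta$ forces $\rho^{-1}(L) = \mu_{\widetilde\alpha}^{-1}((\iota^*)^{-1}(c)) \cap {\mu_G}^{-1}(0)$, and because $G \cong T^k$ is a subtorus of $T^{d+1}$ the set $(\iota^*)^{-1}(c) \cap \widetilde\Delta$ intersected with the zero level of $\mu_G$ picks out a single $T^{d+1}$-orbit, which is the pre-Lagrangian toric fiber in question. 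I would cite \cite[Section 3]{abreumacarini} here since this is the contact analogue of the symplectic reduction picture and the paper has already deferred the analogous proofs there.

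The only genuine content is Lemma~\ref{lift reduction}, which we are entitled to assume; modulo that lemma the proposition is a two-line diagram chase. The part that deserves a sentence of care is the implication ``disjoint images under $\rho$ $\Rightarrow$ disjoint sets,'' which is immediate because $\rho$ is a function: if $x \in \widetilde\varphi(\widetilde L) \cap \widetilde L$ then $\rho(x)$ would lie in $\rho(\widetilde\varphi(\widetilde L)) \cap \rho(\widetilde L) = \varphi(L) \cap L = \emptyset$, a contradiction. I expect no real obstacle here; the main point to get right is bookkeeping — ensuring the lifted isotopy genuinely satisfies the intertwining relation on ${\mu_G}^{-1}(0)$ and that $\widetilde L$ sits inside that level set — after which displaceability transfers formally. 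Contrapositively, this says non-displaceability descends under contact reduction, parallel to Proposition~\ref{lift.preq} for prequantization.
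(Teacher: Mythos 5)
Your argument is correct and is essentially the argument the paper intends: the paper gives no written proof, deferring to \cite[Section 3]{abreumacarini}, where the symplectic analogue is proved by exactly this lift-and-diagram-chase using the intertwining relation $\rho\circ\widetilde\varphi=\varphi\circ\rho$ from the lifting lemma. Your added bookkeeping (that $\widetilde L\subset{\mu_G}^{-1}(0)$, that the lifted isotopy preserves this level set so the relation makes sense there, and that disjoint $\rho$-images force disjoint sets) is precisely the content hidden in that reference.
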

\subsection{Contact cuts and displaceability.}\label{section contact cuts}
The procedure of contact cutting was defined by Lerman in \cite{Lcuts} who also proved the following result.
\begin{theorem}[Theorem 2.11 in \cite{Lcuts}]
Let $(V, \alpha)$ be a contact manifold with an action of $\S^1$ preserving $\alpha$ and let $\mu_{\alpha}$ denote the corresponding 
$\alpha$-moment map. Suppose that $\S^1$ acts freely on the zero level set $\mu_{\alpha}^{-1}(0)$. Then the {\bf cut manifold}, defined as
$$V_{[0,\infty)}=\mu_{\alpha}^{-1}([0,\infty))/\sim,$$
where $x_1\sim x_2$ if and only if $\mu_{\alpha}(x_1)=\mu_{\alpha}(x_2)=0$ and the points $x_1$ and $ x_2$ are in the same circle orbit,
is naturally a contact manifold. Moreover, the natural embedding of the reduced space $V_0=\mu_{\alpha}^{-1}(0)/\S^1$ into 
$V_{[0,\infty)}$ is contact and the complement $V_{[0,\infty)}\setminus V_0$ is contactomorphic to the open subset 
$\{x \in V;\ \mu_{\alpha}(x)>0\}$ of $(V, \alpha)$.
\end{theorem}
If the above $\S^1$ is a subcircle of a torus $\T^{\frac{1}{2}(\dim V+1)}$ acting on $V$ in a Hamiltonian way (thus giving $V$ a structure of a contact toric manifolds) then the $\T^{\frac{1}{2}(\dim V+1)}$ action restricts to an action on the cut manifold $V_{[0,\infty)}$ turning it also into a contact toric manifold. The moment cone corresponding to $V_{[0,\infty)}$ is the intersection of the cone of $V$ in $\R^{\frac{1}{2}(\dim V+1)}=Lie(\T^{\frac{1}{2}(\dim V+1)})^*$ with the half space 
$\{x \in \R^{\frac{1}{2}(\dim V+1)};\ \langle x, \xi \rangle \geq 0\}$ where $\xi \in Lie(\T^{\frac{1}{2}(\dim V+1)})$ is the infinitesimal generator of the chosen $\S^1$.
The following easy observation will be used repetitively throughout Section \ref{section displace by cuts}.
\begin{lemma}\label{extend from cut}
Any contactomorphism of $V_{[0,\infty)}$, compactly supported in $V_{[0,\infty)}\setminus V_0$, can be extended to a contactomorphism of the whole $V$.
Therefore, if $L \subset \mu_{\alpha}^{-1}((0,\infty)) \subset V $ is a pre-Lagrangian displaceable in $V_{[0,\infty)}$ by a contact isotopy supported in $V_{[0,\infty)}\setminus V_0$, then it is displaceable in $V$.
\end{lemma}
\begin{example}\label{example cut}
The standard $\T^2$ action on the contact toric manifold $V:=\S^1 \times \S^2 \subset \S^1 \times \R \times \C$ comes from the standard $\S^1$ actions on $\S^1$ and on $\C$. The associated moment cone is presented on the left picture in Figure \ref{figure cut}.
(This space is a representative of the family $\T^k \times \S^{2d+k-1}$ discussed in Section  \ref{section displace by cuts}.)
Performing a contact cut with respect to the circle $\S^1=\{(t,t) \in \T^2\}$ we obtain a contact toric manifold $V_{[0,\infty)}$
whose moment cone is presented on the right picture.
\begin{figure}[h]
\includegraphics[width=1\textwidth]{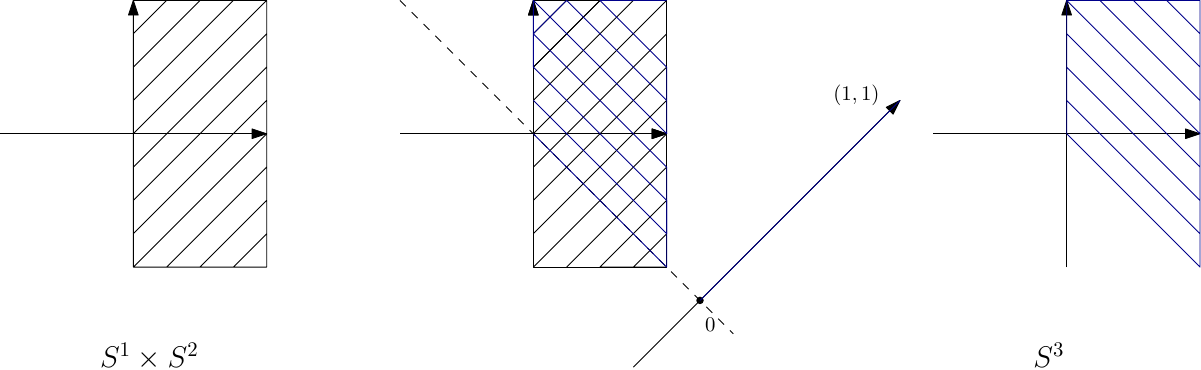}
\caption{The contact cut of $\S^1 \times \S^2$ with respect to $\S^1=\{(t,t) \in \T^2\}$ and the resulting in $\S^3$.}
\label{figure cut}
\end{figure}
This cone differs from the moment cone of $\S^3$ with the standard contact structure, viewed as the prequantization of $(\C\P^1, \frac{1}{\pi} \omega_{FS})$, by a $GL(2,\Z)$ transformation (see Example \ref{example sphere is a prequantization}). 
Therefore $V_{[0,\infty)}$ is contactomorphic to $(\S^3, \xi_{st})$ and the toric action differs (from the 
action on $\S^3$ viewed as the prequantization of $\C\P^1$) just by a reparametrization of the torus. In particular the torus orbits remain unchanged.
Using Proposition \ref{lift.preq} we can 
to lift isotopies of $\C\P^1$ obtained by the method of probes
(Section \ref{method of probes}) to isotopies of $(\S^3, \xi_{st})$.
These isotopies displace pre-Lagrangians toric fibers corresponding to the rays contained in the green region in Figure \ref{figure displaceable}.
Moreover, as they are supported in  $ V_{[0,\infty)}\setminus V_0 $, they can be extended to isotopies of $\S^1 \times \S^2$. Therefore all pre-Lagrangian toric fibers in $\S^1\times \S^2$ which map to the green region are displaceable.
\begin{figure}[h]
\centering
\includegraphics[width=.25\textwidth]{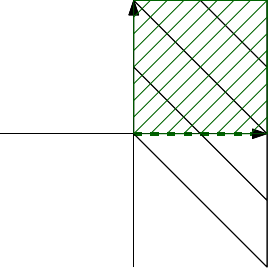}
\caption{Pre-Lagrangians displaceable in $\S^3$ by isotopies coming from the probes methods.}\label{figure displaceable}
\end{figure}
\end{example}
\section{Displaceability of fibers in non-orderable manifolds}\label{displace not ord}

In this Section we prove that for all contact toric manifolds that are presently known to be not orderable all pre-Lagrangian toric fibers are displaceable. This observation suggest that the lack of orderability of a contact toric manifold
implies displaceability of its toric fibers. Our proof is done on a case by case basis. Therefore it, unfortunately, does not explicitly show how the lack of orderability could affect the displaceability of the fibers.

To the best of our knowledge the only contact toric manifolds which are {\it known} to be non-orderable are:
\begin{itemize}
\item Contact spheres $(\mathbb{S}^{2d-1},\xi_{st} )$, with $d\geq2$, equipped with the standard contact structure $\xi_{st}=T \mathbb{S}^{2d-1}\cap J(T\mathbb{S}^{2d-1})$.
\item  Contact toric manifolds $\T^k \times \S^{2d+k-1}$, with $d\geq 2$, $k> 0$, with contact structure described explicitly in Section \ref{section displace by cuts}.
\end{itemize}
The first ones are ideal contact boundaries of $\C^d$, i.e. the $d$-stabilization of a point.
The second ones are ideal contact boundaries of $(T^* \S^1)^k \times \C^d$ i.e. the $d$-stabilization of the Liouville manifolds $(T^* \S^1)^k$.
They are not orderable by a result of Eliashberg-Kim-Polterovich (Theorem 1.16 in \cite{EKimP}). 
It remains an open question whether $\T^k \times \S^{k+1}$ are orderable, even in the case $k=1$.
\subsection{Contact sphere}\label{contact sphere}
 The standard contact sphere $(\mathbb{S}^{2d-1},\xi_{st} )$ can be presented in complex coordinates as
$\mathbb{S}^{2d-1}=\{(z_1,\ldots,z_d)\in\mathbb{C}^d\hskip1mm|\hskip1mm \sum_{j=1}^{d}|z_j|^2=1\}$ with $\xi_{st}$ given as the kernel of $\alpha_{st}:=\frac{i}{4}\sum_{j=1}^d(z_jd\bar{z}_j-\overline{z}_jdz_j).$
The standard $\T^d$-action on $\mathbb{S}^{2d-1}$ defined by
 $$(t_1,\ldots,t_d)\ast (z_1,\ldots ,z_d)\longmapsto(t_1 z_1,\ldots, t_d z_d)$$
 makes the sphere $(\mathbb{S}^{2d-1},\xi_{st})$ a contact toric manifold. The moment map with respect to $\alpha_{st}$ is
 $\mu_{\alpha_{st}}(z_1,\ldots,z_d)=\pi(|z_1|^2,\ldots,|z_d|^2).$ Thus, the moment cone is $\R_{\geq0}^{d}.$
 A pre-Lagrangian toric fiber in $(\mathbb{S}^{2d-1},\xi_{st} )$ is any submanifold
 \begin{equation}\label{fiber in the sphere}
 L_{c_1,\ldots,c_d}=\{(z_1,\ldots,z_d)\in\mathbb{S}^{2d-1}\hskip1mm|\hskip1mm |z_j|=c_j,\hskip1mm j=1,\ldots,d\}
 \end{equation}
 where $c_j\in(0,1)$ are constants such that $\sum_{j=1}^{d}c_j^2=1.$ We will now prove
 Proposition \ref{displace sphere}, i.e. the fact that all pre-Lagrangian toric fibers are displaceable.
 In fact a stronger result is true.
We are grateful to Patrick Massot and the referee who independently pointed out to us that, as long as one does not require the displacing isotopy to displace {\it all} pre-Lagrangian toric fibers simultaneously, one can construct an isotopy displacing one particular fiber in the following way.
(Our original idea, providing one isotopy simultaneously displacing all the fibers, is explained in Remark \ref{displace all with one iso}.) 
\begin{proposition}\label{displace all closed}
 Every closed proper subset of $(\S^{2d-1},\xi_{st})$, with $d\geq 2$, is displaceable. 
 \end{proposition}
 \begin{proof}
  Let $L\subset \S^{2d-1}$ be a closed proper subset and let $p\in (\S^{2d-1} \setminus L)$ be any point. 
 The key point in this proof is the observation that $(\S^{2d-1}\setminus \{p\},\xi_{st})$ is contactomorphic to $\R^{2d-1}$ with its standard contact structure $\xi_0=\ker(dx_{d}+\sum_{j=1}^{d-1}x_j\cdot dy_j)$ 
 (by Proposition 2.1.8 and Example 2.1.3 in \cite{Geiges}). 
 Denote this contactomorphism by 
 $$\psi \colon (\S^{2d-1}\setminus\{p\},\xi_{st}) \rightarrow (\R^{2d-1},\xi_0).$$
Observe that any translation in the $x_d$ direction is a contactomorphism of $(\mathbb{R}^{2d-1}, \xi_0).$
As $L$ is compact, there exists $R>0$ such that
the contact isotopy $\{\phi_t\}_{t\in[0,1]}$, with $\phi_t$ being the translation in $x_d$ direction by $tR$, displaces $\psi(L)$. Let $h_t$ denote the contact Hamiltonian generating the isotopy $\phi_t$.
Let $\delta$ be a cut-off function on $\R^{2d-1}$ which is $1$ on a neighborhood of $\cup_{t\in[0,1]}\phi_t(\psi(L))$, and $0$ outside of a compact set. Then the isotopy generated by the contact Hamiltonian $\delta h_t$ displaces $\psi(L)$. Moreover, as it is compactly supported in $\R^{2d-1}=\psi(\S^{2d-1}\backslash\{p\})$, the isotopy $\psi^{-1} \circ \phi_t \circ \psi$ of $\S^{2d-1}\backslash\{p\}$ can be extended to an isotopy of the whole sphere $\S^{2d-1}$, displacing $L$.
 \end{proof}
 
\begin{remark}\label{displace all with one iso}
In fact all pre-Lagrangian toric fibers can be displaced using only one contact isotopy. 
 In the proof of their non-orderability result, Eliashberg, Kim and Polterovich build a positive contractible loop as a composition of certain contactomorphisms. One of these building blocks is an isotopy called a distinguished contact isotopy.  In the case of $\S^{2d-1}$ they give an explicit formula for this isotopy. Moreover Giroux in \cite{Giroux} gives another formula for a contact isotopy of $\S^{2d-1}$ playing the same role in the construction of a positive contractible loop. This contact isotopy displaces all pre-Lagrangian toric fibers in $\S^{2d-1}$, $d\geq 2$, as we now explain.
  Consider the map $\tau_t:B^{2d}\rightarrow \bb{C}^{d}, t\geq0$ defined by
 $$\tau_t(z_1,\ldots,z_d)=\frac{1}{\cosh t+z_1\sinh t}(\sinh t+z_1\cosh t,z_2,\ldots,z_d),$$
 where $B^{2d}$ is the unit ball and 
 $\cosh t=\frac{e^t+e^{-t}}{2},$ $\sinh t=\frac{e^t-e^{-t}}{2}$  (\cite{Giroux}).
 We first observe that the map $\tau_t$ is well defined because $\cosh t+z_1\sinh t \neq0$ for every $|z_1|\leq1$ and every $t\geq0.$ Indeed, if $t=0$ the expression is equal to 1. Suppose there is some $t>0$ and $z_1=x+iy$ such that $\cosh t+z_1\sinh t =0.$ That would imply that $\cosh t+ x\sinh t=0$ and $y\sinh t=0.$  Consider the function $f(x)=\cosh t+x\sinh t$ on the domain $|x|\leq1,$ for a fixed constant $t>0.$ We have $f(-1)=e^{-t}>0$ and $f'(x)=\cosh t>0,$ hence $f(x)>0$ for all $|x|\leq1.$
By a straightforward calculation we check that $\tau_t$ is a complex automorphism of the unit ball $(B^{2d},\frac{i}{2}\sum dz_j\wedge d\bar{z}_j)$.
Since every complex automorphism of the unit ball $B^{2d}\subset \mathbb{C}^d$ restricts to a contactomorphism of its boundary, i.e. the standard contact sphere $(\mathbb{S}^{2d-1},\xi_{st})$, it follows that
  $\tau_t$ is a contactomorphism on $\mathbb{S}^{2d-1}$ for every $t\geq0.$ Since $\tau_0=id$, the map $\tau_t$ is a well defined contact isotopy starting at the identity.
 Take any pre-Lagrangian toric fiber $L=L_{c_1,\ldots,c_d}$ given by (\ref{fiber in the sphere}). We claim that for $t>0$ large enough
 $$L\cap\tau_t(L)=\emptyset.$$
 It is enough to show that there exists $t>0$ big enough so that for any $z_1$ with $|z_1|=c_1 \in (0,1)$, we have $|\cosh t+z_1\sinh t|\neq1$, as this implies that the norm of the second coordinate changes after applying $\tau_t$, and thus such $\tau_t$ displaces $L$. 
 Write $z_1=x+iy,$ hence $x^2+y^2=c_1^2$, and
 consider the function $f_t(x)=|\cosh t+z_1\sinh t|^2-1$, for $|x|\leq c_1.$
 Note that
 \begin{align*}
 f_t(x)&=|\cosh t+z_1\sinh t|^2-1=(\cosh t+x\sinh t)^2+(y\sinh t )^2-1\\
 &=\cosh^2t+2x\cosh t\sinh t +c_1^2\sinh^2t-1=\sinh^2 t+2x\cosh t\sinh t +c_1^2\sinh^2t
 \\
 &=\sinh t(2x\cosh t+(1+c_1)\sinh t)
 \end{align*}
 Since $\sinh t>0$ for $t>0$, it is enough to show that for $t$ big enough the expression $(2x\cosh t+(1+c_1)\sinh t)$
  is positive for all $|x|\leq c_1<1$. 
 Take $t>0$ such that $\tanh t>\frac{-2x}{1+c_1}.$ Such $t$ always exists since $\tanh t$ tends to infinity when $t$ is large.
 For this choice of $t$ we have that $f_t(x)>0$ for every $|x|\leq c_1<1$, proving that $\tau_t$ displaces $L$.\eor
\end{remark}
\begin{remark} Note that we showed $L\cap\tau_t(L)=\emptyset$ by comparing the norm of the second coordinate, thus we used the fact that $d\geq 2$. In the case $d=1$ the map $\tau$ is a well defined contactomorphism of a circle. However as the only pre-Lagrangian of $\S^1$ is the whole $\S^1$, it is trivially non-displaceable.
\eor
\end{remark}
\subsection{Displaceability of fibers in $\T^k \times \S^{2d+k-1}$.}\label{section displace by cuts}
Consider the contact toric manifold $\T^k \times \S^{2d+k-1}$\label{tks2dk} described as
$$\{ (e^{2\pi i\theta_1},\ldots,e^{2\pi i\theta_{k}}, x_1,\ldots, x_k, z_1,\ldots, z_d) \in \T^k \times \R^k \times \C^d;\, \sum_{l=1}^k x_l^2 + \sum_{j=1}^d |z_j|^2=1\}$$
with the contact structure given by the kernel of the form
$$\beta_k=\sum_{l=1}^k x_l d\theta_l+\frac{i}{4}\sum_{j=1}^d(z_jd\bar{z}_j-\overline{z}_jdz_j).$$
This is the ideal contact boundary of $T^*\T^k \times \C^d$, and thus is non-orderable 
if $d\geq 2$ by Theorem 1.16 in \cite{EKimP}.
The toric action of $\T^{d+k}$ is given by
$$(t_1,\ldots,t_d,s_1,\ldots,s_k)\ast  (e^{2\pi i\theta_1},\ldots,e^{2\pi i\theta_{k}}, x_1,\ldots, x_k, z_1,\ldots, z_d)=$$
$$(s_1 e^{2\pi i\theta_1},\ldots,s_ke^{2\pi i\theta_{k}}, x_1,\ldots, x_k, t_1z_1,\ldots, t_dz_d).$$
 The corresponding  $\beta_k$-moment map, 
 $\mu_{\beta_k}\colon \T^k \times \S^{2d+k-1} \rightarrow Lie(\T^{d+k})^*= \R^{d+k}$ is
$$\mu_{\beta_k} (e^{2\pi i\theta_1},\ldots,e^{2\pi i\theta_{k}}, x_1,\ldots, x_k, z_1,\ldots, z_d)=\pi(|z_1|^2,\ldots,|z_d|^2,x_1,\ldots,x_k),$$
thus the moment cone is $(\R_{\geq 0})^d \times \R^k.$
Pre-Lagrangian toric fibers in $\T^k\times\mathbb{S}^{2d+k-1}$ 
are submanifolds $L=L_{c_1,\ldots,c_d}$ given by
 $$\{ (e^{2\pi i\theta_1},\ldots,e^{2\pi i\theta_{k}}, x_1,\ldots, x_k, z_1,\ldots, z_d)\in \T^k\times\mathbb{S}^{2d+k-1}\hskip1mm;\hskip1mm |z_j|=c_j,\hskip1mm j=1,\ldots,d\}$$
for some $x_1,\ldots, x_k\in\R$ and $c_1,\ldots,c_d>0$ such that $\sum_{l=1}^kx_l^2+\sum_{j=1}^dc_j^2=1$, and thus they correspond to the rays $\{t(c_1,\ldots,c_d,x_1,\ldots,x_k);\ t \in \R_+\}$
in 
$$(\R_{> 0})^d \times \R^k \subset Cone(\mu_{\beta_k}(\T^k \times \S^{2d+k-1})).$$

In this subsection we show that all pre-Lagrangian toric fibers in $(\T^k \times \S^{2d+k-1},\ker\beta_k)$, $d\geq 1$, $k\geq 1$, are displaceable.
To construct a contact isotopy displacing a given pre-Lagrangian toric fiber $L$ of $\T^k \times \S^{2d+k-1}$ we will look at a manifold obtained via contact cutting $\T^k \times \S^{2d+k-1}$ with respect to appropriate circles. 
\begin{proof}[Proof of Theorem \ref{displace in boundary of stabilization} with $d \geq 2$.]  
 \hfill \break 
 {\bf Case $k=1$.}
We start by analyzing $\S^1 \times \S^{2d}$.
The cone corresponding to this contact toric manifold is 
$C=C_{\S^1 \times \S^{2d}}= \R_{\geq 0}^d \times \R,$ and its outward normals are $-e_1,\ldots, -e_d$.
Perform a contact cut with respect to the diagonal circle in $\T^{d+1}$ (see Section \ref{section contact cuts}).
Denote the moment map for that circle by $\mu^+$, i.e. $\mu^+(\theta, x, z_1,\ldots, z_d) \mapsto \pi(x +\sum_{j=1}^d |z_j|^2)$,
and the resulting cut manifold
by 
$$M^+:=(\mu^+)^{-1}([0,\infty))/\sim.$$ 
It is a contact toric manifold corresponding to the convex cone, $C_1=C_1^+$, 
obtained from $C$ by cutting it with the hyperplane perpendicular to the vector $-\sum_{l=1}^{d+1} e_l$.
This means that $C_1$ is spanned by the directions:
$e_{d+1}$ and $e_i-e_{d+1}$ for $i=1,\ldots,d$.
In fact it is the contact toric sphere $\S^{2d+1}$. Indeed,
as we have already seen in Example \ref{example sphere is a prequantization} contact sphere $\S^{2d+1}$, viewed as the prequantization of $(\mathbb{CP}^{d}, \frac{1}{\pi}\omega_{FS})$, is the contact toric manifold corresponding to the cone, which we call $C_2$, whose edges are in the directions $e_i+e_{d+1},$ for $i=1,\ldots,d$.
The $GL(d+1,\Z)$ transformation given by the following lower triangular matrix (only the last row and the diagonal have non-zero entries) maps the cone $C_1$ to the cone $C_2$.
\begin{displaymath}
A_1=\left[
\begin{array}{rrrrr}
\,1\, && &&\\
&\,1\, & &&\\
&&\ldots &&\\
&&&\,1\, &\\
\,2\,&\,2\,&\ldots&\,2\,&\,1\\
\end{array} \right].
\end{displaymath}
Note that $d=1$ is exactly the case described in Example \ref{example cut} and on Figure \ref{figure cut}.

To construct isotopies displacing pre-Lagrangian toric fibers in $\S^{2d+1}$ we will use Lemma \ref{lift} and isotopies displacing Lagrangian toric fibers in $(\mathbb{CP}^{d}, \frac{1}{\pi}\omega_{FS})$ obtained via McDuff's method of probes (see Example \ref{method of probes}).
The point of using these particular isotopies of $\S^{2d+1}$ (instead of, for example, isotopies from Proposition \ref{displace all closed}) is that we know their support: it is far from the ``cut" and therefore these isotopies of $\S^{2d+1}$ are extendable to isotopies of $\S^1 \times \S^{2d}$.

Recall that $(\mathbb{CP}^{d}, \frac{1}{\pi}\omega_{FS})$ is a symplectic toric manifold whose moment map image is the $d$-dimensional simplex of size $1$, $\Delta^d(1)$. For each $i=1,\ldots,d$ let $F_i$ denote the facet of $\Delta^d(1)$ whose outward normal is $-e_i$. The vector $e_i$ can be used as a direction of a probe. This way we can displace all Lagrangian toric fibers in $\mathbb{CP}^{d}$ corresponding to points $x\in \Delta^d(1) \subset \R^d$ such that
$$x= \frac{1}{2}a_ie_i+\sum_{l\neq i}a_le_l,\textrm{ with }\ \ a_1,\ldots,a_d>0$$
(i.e. to the points in the interior of the convex hull of points  $0, \frac 1 2 e_i,$ and $ \{e_l\}_{l\neq i}$)
by an isotopy with support contained in the preimage of $\ii \Delta^d(1) \cup \ii F_i$. 
Lifting this isotopy to the sphere we can displace any pre-Lagrangian toric fiber corresponding to 
a ray through $y \in C_2\subset \R^{d+1}$ such that
$$y= a_i(2e_{d+1}+e_i)+a_{d+1}e_{d+1}+\sum_{l\neq i,\,d+1}a_l(e_l+e_{d+1}),\textrm{ with }\ \ a_1,\ldots,a_{d+1}>0$$
(i.e. of the interior of the cone spanned by the directions
$$e_1+e_{d+1},\ldots, e_{i-1}+e_{d+1},e_i+2e_{d+1},e_{i+1}+e_{d+1},\ldots, e_d+e_{d+1},e_{d+1})$$
with an isotopy supported on the preimage of $\ii C_2 \cup \ii \widetilde{F}_i$, where $ \widetilde{F}_i$ is the facet of $C_2$ with normal $-e_i$. (Figure \ref{figure displaceable} in Example \ref{example cut} presents this set for the case $d=1$.)
The map $A_1^{-1}$, mapping the cone $C_2$ to the cone $C_1$ of the cut space $M_+$,  maps this set to the set of points 
$$y= a_ie_i+a_{d+1}e_{d+1}+\sum_{l\neq i,\,d+1}a_l(e_l-e_{d+1}),\textrm{ with }\ \ a_1,\ldots,a_{d+1}>0.$$
 These are the points $y=(y_1,\ldots,y_{d+1}) \in C_1$ with $y_l>0$ for $l \in \{1, \ldots, d\}$, and 
 $y_{d+1}> -\sum_{l \neq i,\ d+1} y_l  .$
 In particular this includes 
 $$(\R_{> 0})^d \times \R_{\geq 0}, \textrm{ if } d>1, \textrm{ and } (\R_{> 0})^1\times \R_{>0}, \textrm{ if } d=1.$$
 Note that all these isotopies are supported away from the facet of $C_1$ with normal $-\sum_{l=1}^{d+1}e_l$.
 Therefore, by Lemma \ref{extend from cut}, these isotopies can be extended to isotopies of the whole $\S^1 \times \S^{2d}$.
 
 We now repeat the whole construction but starting with contact cutting with respect to the circle $(t,\ldots,t, t^{-1}) \in \T^d \times \S^1$.
 The moment map for that circle is 
$$\mu^-(\theta, x, z_1,\ldots, z_d) \mapsto \pi(-x +\sum_{i=1}^d |z_i|^2).$$
Thus the cut manifold $M^-:=(\mu^-)^{-1}([0,\infty))/\sim$ is a contact toric manifold corresponding to the cone, $C^-_1$, spanned by the edges $-e_{d+1}$ and $e_i+e_{d+1}$ for $i=1,\ldots,d$.
The resulting cut space is again a toric contact sphere $\S^{2d+1}$. 
 Repeating the whole process we construct isotopies of $\S^1 \times \S^{2d}$ that displace, in particular, pre-Lagrangian toric fibers corresponding to the rays in 
 $(\R_{>0})^{d}\times \R_{\leq 0}$ if $d\geq 2$, and to the rays in $\R_{>0}\times \R_{< 0}$ if $d=1$.
 
 Putting these two steps together we are able to displace {\it all} pre-Lagrangian toric fibers of $\S^1 \times \S^{2d}$ if $d \geq 2$,
 and all fibers in $\S^1 \times \S^{2}$ {\it apart} from the fiber corresponding to the ray $\{(y_1,0);\ y_1 \in \R_{\geq 0}\}$ if $d=1$.
 It will be shown later that this fiber is also displaceable.
 
 {\bf General case.} 
 To  obtain a sphere as a cut manifold of $\T^k \times \S^{2d+k-1}$ and displace certain fibers as we did above for the case $k=1$, we need to cut $\T^k \times \S^{2d+k-1}$ $k$ times in $2^k$ different ways. To keep a record of the cuts we introduce the following notation. For any $j \in \{1,\ldots,k\}$ and any $\varepsilon_j \in \{-1,1\}$, let $\S^1_{j,{\varepsilon_j}}$ denote the circle subgroup of $\T^d \times \T^k$ generated by 
$$(t,\ldots,t; 1,\ldots,1,t^{\varepsilon_j},1,\ldots, 1),$$
where $t^{\varepsilon_j}$ is at the position $d+j$.
For any $\varepsilon=(\varepsilon_1,\ldots,\varepsilon_k) \in \{-1,1\}^k$ let $M^{\varepsilon}$ denote the contact toric manifold obtained from $T^k \times \S^{2d+k-1}$ by consecutive performing $k$ contact cuts, with respect to circles $\S^1_{1,{\varepsilon_1}}$, ..., $\S^1_{k,{\varepsilon_k}}$. We denote the corresponding moment cone by $C^{\varepsilon}_1$.
Observe that $C^{\varepsilon}_1$ is the cone in $\R^{d+k}$ with $(d+k)$ facets whose outward normals are
$$-e_1,\ldots, -e_d, \evj:=-\sum_{i=1}^d e_i - \varepsilon_j e_{d+j}, \ j=1,\ldots, k.$$
The edges of this cone have directions
$$ \varepsilon_j e_{d+j},\ j=1,\ldots, k,$$
and
$$e_i-\sum_{j=1}^k \varepsilon_j e_{d+j},\ i=1,\ldots,d.$$
The facets of $C^{\varepsilon}_1$ created by cutting are the ones with normals $\evj, \ j=1,\ldots, k.$
Thus there is a contactomorphism between the preimage in $M^{\varepsilon}$ of $C^{\varepsilon}_1\setminus \{\textrm{ facets with normals }\evj, \ j=1,\ldots, k\}$, which we denote by $U^{\varepsilon}$, and an open subset of $\T^k \times \S^{2d+k-1}$.
Any contact isotopy of $M^{\varepsilon}$ compactly supported in $U^{\varepsilon}$ can be extended to a contact isotopy of the whole $\T^k \times \S^{2d+k-1}$.

Note that the space $M^{\varepsilon}$ is contactomorphic to the standard contact sphere $\S^{2d+2k-1}$ and the action differs only by a reparametrization of the torus.
Indeed, the $GL(d+k,\Z)$ transformation given by the following matrix
\begin{displaymath}
A^{\varepsilon}_1=\left[
\begin{array}{ccc|cccc}
1 && &&&&\\
&\ddots& &&&&\\
&&1&&&&\\
\hline
1&\ldots&1&\varepsilon_1&&&\\
&\ldots&&&\ddots&\\
1&\ldots&1&&&\varepsilon_{k-1}&\\
1+k&\ldots&1+k&\varepsilon_1&\ldots&\varepsilon_{k-1}&\varepsilon_{k}
\end{array} \right]
\end{displaymath}
maps  
$$e_i-\sum_{j=1}^k \varepsilon_j e_{d+j} \mapsto  e_i+e_{d+k},\ \ i=1,\ldots,d$$
$$\varepsilon_j e_{d+j} \mapsto e_{d+j}+e_{d+k},\ \ j=1,\ldots, k-1,$$ 
$$\varepsilon_k e_{d+k} \mapsto  e_{d+k}.$$
Therefore $A^{\varepsilon}_1$ maps the cone $C^{\varepsilon}_1$ to the cone, that we denote by $C^{\varepsilon}_2$, of  the contact toric sphere $\S^{2d+2k-1}$ viewed as the prequantization of $(\C\P^{d+k-1}, \frac{1}{\pi} \omega_{FS}) $.

Observe that the facet of $C^{\varepsilon}_1$ with normal $\evj,$ $j=1,\ldots, k-1$, is the cone with the edges in the directions $\varepsilon_l e_{d+l}$, $l\in \{1,\ldots,k\}\setminus \{j\}$ and $e_i-\sum_{j=1}^k \varepsilon_j e_{d+j},\ i=1,\ldots,d.$ Such facet is mapped by $A^{\varepsilon}_1$ to the facet of $C^{\varepsilon}_2$ which is the cone with the edges in the directions $e_{d+l}+e_{d+k}$, $l\in \{1,\ldots,k\}\setminus \{j,k\}$, $e_{d+k}$ and $e_i+e_{d+k}$ for $i=1,\ldots,d$, i.e.
the facet of $C^{\varepsilon}_2$ with normal $-e_{d+j}$. 
The facet of $C^{\varepsilon}_1$ with normal $\eta_{\varepsilon_k},$ i.e. the cone whose edges are in the directions $\varepsilon_l e_{d+l}$, $l\in \{1,\ldots,k-1\}$ and $e_i-\sum_{j=1}^k \varepsilon_j e_{d+j},\ i=1,\ldots,d,$ is mapped by $A^{\varepsilon}_1$ to the facet of $C^{\varepsilon}_2$ which is the cone with the edges in the directions $e_{d+l}+e_{d+k}$, $l\in \{1,\ldots,k-1\}$, and $e_i+e_{d+k}$ for $i=1,\ldots,d$, i.e.
the facet of $C^{\varepsilon}_2$ with normal $-\sum_{l=1}^{d+k} e_{l}$. 
Therefore the contactomorphism mentioned above is between an open subset $U^{\varepsilon}$ of $\T^k \times \S^{2d+k-1}$ and the preimage in $\S^{2d+2k-1}$ of the set 
$$C^{\varepsilon}_2 \setminus \{\textrm{ facet with normal }(-\sum_{l=1}^{d+k} e_{l}) \textrm{ and facets with normals }-e_{d+j}, \ j=1,\ldots, k-1\}.$$

Similarly to the case of $\S^1\times \S^{2d}$ analyzed before, we displace certain pre-Lagrangian toric fibers in $M^{\varepsilon}$ by isotopies that are the lifts of isotopies of $\C\P^{d+k-1}$ obtained via the method of probes.
For  $i=1,\ldots, d$ let $F_i$ denote the facet of $\Delta^{d+k-1}(1)$, the moment cone of $(\C\P^{d+k-1}, \frac{1}{\pi} \omega_{FS})$, with normal $-e_i$.
As we have already observed, the vector $e_i$ can be used as a direction of a probe, producing an isotopy of $\C\P^{d+k-1}$ displacing Lagrangian toric fibers corresponding to 
$$x= \frac{1}{2}a_ie_i+\sum_{l\neq i}a_le_l,\textrm{ with }\ \ a_1,\ldots,a_{d+k-1} \geq 0,\ \ \sum_{l=1}^{d+k-1}a_l<1,$$
by an isotopy with support contained in the preimage of $\ii \Delta^{d+k-1}(1) \cup \ii F_i$. 
Lifting this isotopy to the sphere we can displace any pre-Lagrangian toric fiber corresponding to a ray through $y \in C_2^{\varepsilon}\subset \R^{d+k}$ such that
$$y= a_i(2e_{d+k}+e_i)+a_{d+k}e_{d+k}+\sum_{l\neq i,\,d+k}a_l(e_l+e_{d+k}),\textrm{ with }\ \ a_1,\ldots,a_{d+k} \geq 0,$$
with an isotopy supported in the preimage of $\ii C_2^{\varepsilon} \cup \ii \widetilde{F}_i$, where $ \widetilde{F}_i$ is the facet of $C_2^{\varepsilon}$ with normal $-e_i$.
The image of this set under the linear map 
\begin{displaymath}
(A_1^{\varepsilon})^{-1}=\left[
\begin{array}{ccc|cccc}
1 && &&&&\\
&\ddots& &&&&\\
&&1&&&&\\
\hline
-\varepsilon_1&\ldots&-\varepsilon_1&\varepsilon_1&&&\\
-\varepsilon_2&\ldots&-\varepsilon_2&&\ddots&\\
&\ldots&&&&\varepsilon_{k-1}&\\
-2\varepsilon_k&\ldots&-2\varepsilon_k&-\varepsilon_k&\ldots&-\varepsilon_{k}&\varepsilon_{k}
\end{array} \right],
\end{displaymath}
mapping the cone $C_2^{\varepsilon}$ to the cone $C_1^{\varepsilon}$, is the set of points 
$$y= a_i(e_i-\sum_{j=1}^{k-1}\varepsilon_j e_{d+j})+a_{d+k}\varepsilon_{k}e_{d+k}+\sum_{l\in[d]\setminus\{i\}}a_l(e_l-\sum_{j=1}^k\varepsilon_j e_{d+j})+
\sum_{l=1}^{k-1}a_{d+l}(\varepsilon_le_{d+l}),$$
with $a_1,\ldots,a_{d+k}>0$. Here we used the symbol $[d]$ to denote the set $\{1,\ldots,d\}$.
 These are the points $y=(y_1,\ldots,y_{d+k}) \in C_1^{\varepsilon}$ with coordinates
$$y=(a_1,\ \ldots,\ a_d,\ \varepsilon_{1}(a_{d+1}-\sum_{l\in[d]}a_l),\  \ldots,\ \varepsilon_{k-1}(a_{d+k-1}-\sum_{l\in[d]}a_l), \ \varepsilon_{k}(a_{d+k}-\sum_{l\in[d]\setminus\{i\}}a_l)),$$
 that is, the points $y=(y_1,\ldots,y_{d+k}) \in C_1^{\varepsilon}$ with $y_l>0$ for $l\in[d]$, and 
 $$\varepsilon_{j} y_{d+j}+\sum_{l\in[d]}y_l=a_{d+j}>0,\ \ \textrm{ for all }j=1,\ldots,k-1,$$
  $$\varepsilon_{j} y_{d+k}+\sum_{l\in[d]\setminus\{i\}}y_l=a_{d+k}>0.$$
   This set contains, in particular, the set 
 $$(\R_{> 0})^d \times \varepsilon_1\R_{\geq 0} \times \ldots \times  \varepsilon_k\R_{\geq 0},$$
 if $d \geq 2$, and if $d=1$ the set
  $$\R_{> 0} \times \varepsilon_1\R_{\geq 0} \times \ldots \times  \varepsilon_k\R_{> 0}.$$
  Here $\varepsilon_j \R_{\geq 0}$ denotes the set $\{x \in \R\,|\, \varepsilon_j x \geq 0\}$.
 The isotopies are supported only in 
 the preimage of 
 $$\ii C_1^{\varepsilon} \cup \ii \textrm{of the facet with normal }(-e_i)$$
 and thus can be extended to isotopies of the whole $\T^k \times \S^{2d+k-1}$.
 Therefore the corresponding fibers are displaceable in $\T^k \times \S^{2d+k-1}$.
 
 The above procedure can be performed for each $\varepsilon \in \{-1,1\}^k$.
 In this way we can displace all pre-Lagrangian toric fibers  in $\T^k \times \S^{2d+k-1}$ corresponding to the rays in $(\R_{> 0})^d \times \R^k$ if $d\geq 2$,
 and to the rays in $\R_{> 0} \times \R^{k-1} \times \R_{\neq 0}$  if $d=1$.
In particular we displace {\it all} pre-Lagrangian toric fibers in $\T^k \times \S^{2d+k-1}$ for $d\geq 2$,
and {\it all apart from} the fibers corresponding to the rays $\{(y_1,\ldots,y_{k},0)\}$ in 
$\R_{> 0} \times \R^{k}$ if $d=1$. 

Note that while mapping the cone $C_1^{\varepsilon}$ to the cone $C_2^{\varepsilon}$, we made a choice to map the direction of $\varepsilon_{d+k}e_{d+k}$ to $e_{d+k}$. The direction $e_{d+k}$ is special while viewing the sphere as the prequantization of the projective space, as it corresponds to the direction of the fibers of prequantization map (Reeb direction). 
We could have made a different choice and require that $A_1^{\varepsilon}$ maps the direction of $\varepsilon_{d+j}e_{d+j}$ to $e_{d+k}$ for some $j=1,\ldots,k-1$. Repeating the whole procedure with this choice allows us to displace the fibers corresponding to the rays $y=(y_1,\ldots,y_{1+k}) \in \R_{> 0} \times \R^k$ with $y_{j+1} \neq 0$.
Still, this method does not displace the pre-Lagrangian fiber corresponding to the ray $y=(y_1,0,\ldots,0) \in \R_{> 0} \times \R^k$.
Recall that in the case $k=1$ and $d=1$ we have also displaced all the fibers apart from the fiber corresponding to the ray $y=(y_1,0) \in \R_{> 0} \times \R$.
Performing a similar procedure with the use of lens spaces instead of spheres allows us to displace these special pre-Lagrangian toric fibers, as we now explain. 

{\bf Special fibers of the case $d=1$.}
Let $k\geq 1$.
Fix a prime number $p$ and consider the prequantization of $(\C\P^{k}, \frac{p}{\pi}\omega_{FS})$, 
i.e. the lens space $L_p^{2k+1}=\S^{2k+1}/\Z_k$ with the contact structure induced from the standard contact structure on the sphere. This is a contact toric manifold whose contact moment cone, that we denote $C_3$, is the convex cone in $\R^{k+1}$ spanned by the directions
$\eta_l=pe_l+e_{k+1}$, for $l=1,\ldots,k$, and $\eta_{k+1}=e_{k+1}$. Applying the $GL(k+1,\Z)$ transformation 
$(A^{\varepsilon}_1)^{-1}$ for $\varepsilon=(1,\ldots,1)=:{\mathbf{1}}$  we obtain a convex cone, $C_1$, whose edges are in the directions 
$$(A^{\mathbf{1}}_1)^{-1} (pe_1+e_{k+1})=pe_1-\sum_{l=2}^kpe_l +(-2p+1)e_{k+1},\ \ 
(A^{\mathbf{1}}_1)^{-1} (e_{k+1})=e_{k+1},$$
$$(A^{\mathbf{1}}_1)^{-1} (pe_j+e_{k+1})=pe_j+(-p+1)e_{k+1},\ j=2,\ldots,k.$$
The outward normals of this cone are
$$\eta_1:=-e_1,\ \eta_j:=-e_1-e_j, \,\textrm{ for }j=2,\ldots,k,$$
and
$$\eta_{k+1}:=(k-(k+1)p)\,e_1+\left((1-p)\sum^{k-1}_{j=2} e_j\right) -pe_{k+1}=$$
$$\left(k-(k+1)p,\ 1-p,\ \ldots,1-p,\ -p\right).$$
Observe that this is the cone one obtains from $\R_{\geq 0}\times \R^k$, i.e. the moment cone of $\T^k \times \S^{k+1}$, after $k$ contact cuts with respect to the circles generated by the vectors $-\eta_j$ for $j=2,\ldots,k$, and $-\eta_{k+1}$.
Similarly as above one shows that any contact isotopy of $L_p^{2k+1}$ compactly supported in the preimage of 
$$C_1 \setminus \{ \textrm{facets with normals } \eta_2,\ldots,\eta_{k+1}\}$$
$$=(A^{\mathbf{1}}_1)^{-1}(C_3 \setminus \{ \textrm{facets with normals } -e_2,\,\ldots, -e_k, (\sum_{j=1}^k e_j)-p e_{k+1}\})$$
can be extended to an isotopy of $\T^k \times \S^{k+1}$.
Observe that, using the method of probes, for probes in $(\C\P^{k}, \frac{p}{\pi}\omega_{FS})$ supported in the facet with 
the outward normal $-e_1$ one can displace all the fibers corresponding to the points in $C_3$ which are contained in a convex cone, denoted by $C_3^{disp}$, spanned by the directions 
$$pe_1+2e_{k+1},\ pe_j+e_{k+1},\ j=2,\ldots,k,\textrm{ and }e_{k+1}.$$
Note that 
$(A^{\mathbf{1}}_1)^{-1}(C_3^{disp})$ is the convex cone spanned by the directions
$$pe_1-p\,\sum_{l=2}^k\,e_l +(-2p+2)e_{k+1},\  pe_j+(-p+1)e_{k+1},\ j=2,\ldots,k,\textrm{ and } e_{k+1}.$$
Therefore pre-Lagrangian toric fibers corresponding to the rays in $(A^{\varepsilon}_1)^{-1}(C_3^{disp}) \subset \R_{\geq 0}\times \R^k$ are displaceable.
Note that $(A^{\varepsilon}_1)^{-1}(C_3^{disp})$ contains the set $\{(y_1,0,\ldots,0)\}$.
Indeed, given any $(y_1,0,\ldots,0) \in \R_{> 0} \times \R^k$ note that
$$\frac{y_1}{p} (pe_1-p\,\sum_{l=2}^k\,e_l +(-2p+2)e_{k+1})+ \frac{y_1}{p}\,\sum_{j+2}^k\, pe_j+(-p+1)e_{k+1} + 
(k+1)\frac{y_1}{p} (p-1)e_{k+1}$$
$$=(y_1,0,\ldots,0).$$
\end{proof}

\begin{example}
To illustrate this idea better we take a closer look at the case when $k=2$.
Then the cone $C_3$ is spanned by the directions $\eta_1=pe_1+e_3$, 
$\eta_2=pe_2+e_3$, and $\eta_3=e_3$. The matrix 
\begin{displaymath}
(A^{\mathbf{1}}_1)^{-1}=\left(\begin{array}{rrr}
1&0&\,0\,\\
-1&1&\,0\,\\
-2&-1&\,1\,
\end{array}\right)
\end{displaymath}
maps the cone $C_3$ to a cone, called $C_1$, spanned by the directions  
$$(p,-p,-2p+1),\ (0,p,-p+1),\ (0,0,1).$$
The outward normals of $C_1$ are 
$$\eta_1=(-1,0,0),\ \eta_2=(-1,-1,0),\ \eta_3=(2-3p,1-p,-p).$$
One obtains $C_1$ from $\R_{\geq 0}\times \R^2$, i.e. the moment cone of $\T^2 \times \S^{3}$, 
by performing two contact cuts with respect to the circles generated by the elements $-\eta_2=(1,1,0)$ and $-\eta_3=(3p-2,p-1,p)$ in the Lie algebra of $\T^3$.
Any contact isotopy of $L_p^{5}$ compactly supported in the preimage of 
$$C_1 \setminus \{ \textrm{facets with normals } (-1,-1,0),\ (2-3p,1-p,-p)\}$$
$$=(A^{\mathbf{1}}_1)^{-1}(C_3 \setminus \{ \textrm{facets with normals } (0,-1,0),\ (1,1,-p)\})$$
can be extended to an isotopy of $\T^2 \times \S^{3}$.
Viewing $L_p^{5}$ as the prequantization of $(\C\P^{2}, \frac{p}{\pi}\omega_{FS})$
one construct isotopies, with compact supports contained in the above sets, displacing pre-Lagrangian toric fibers of $L_p^{5}$
corresponding to the rays in the interior of the cone $C_3^{disp}$ 
spanned by the directions
 $(p,0,2),$ $(0,p,1),$ and $(0,0,1).$
Then the set $(A^{\mathbf{1}}_1)^{-1}(C_3^{disp})$ is a cone spanned by the directions 
$(p,-p,-2p+2)$, $(0,p,-p+1)$, and $(0,0,1)$ 
and it contains the set $\R_{> 0} \times \{0\}\times \{0\}$
as given any $(y,0,0)$ we have that
$$\frac y p (p,-p,-2p+2)+\frac y p (0,p,-p+1)+\frac{3y}{ p}(3p-3)(0,0,1)=(y,0,0).$$
\end{example}

\section{Free torus actions and non-displaceability of pre-Lagrangian toric fibers.}\label{section free non-displaceable}
In this section we prove that in all contact toric manifolds with free toric action, except possibly for non-trivial principal $\T^3$-bundles over $\S^2,$ all pre-Lagrangian toric fibers are non-displaceable. Moreover we show that all contact toric manifolds whose toric action is not free have uncountably many displaceable pre-Lagrangian toric fibers. Note that when the toric action is free then every orbit is a pre-Lagrangian.
From Lerman's classification of contact toric manifolds, recalled here on page \pageref{classification}, it follows that the contact toric manifolds equipped with a free toric action are $(\T^3,\ker \alpha_k)$, $k\geq 1$, and principal $\T^{d}$ bundles over $\S^{d-1}$ with the unique $\T^d$-invariant contact structures.
Note that if $d \neq 3$ such bundle must be trivial. Indeed, principal $\T^d$-bundles over a manifold are classified by homotopy classes of maps from this manifold to the classifying space $B\T^d=(\bb{C}\P^{\infty})^d$. As $(\C\P^{\infty})^d$ is an Eilenberg-MacLane space $K(\bb{Z}^d,2)$, there is a bijection between the set of these homotopy classes of maps and the second cohomology group of the given manifold with coefficients in $\bb{Z}^d$. Therefore principal $\T^d$ bundles over $\S^{d-1}$ are classified by $H^2(\S^{d-1};\bb{Z}^d)$. This group is trivial for $d\neq 3$ and equal to $\bb{Z}^3$ for $d=3$.
Therefore
compact connected contact toric manifolds with a free toric action are:
\begin{itemize}
\item In dimension $3$: $(\T^3,\ker \alpha_k),$ $k \geq1$.
\item In dimension $5$: a $\Z^3$ collection of principal $\T^{3}$-bundles over $\S^{2}$, each with a unique $\T^3$-invariant contact structure.
\item In dimension greater than $5$:  $\T^{d}\times\S^{d-1},$ $d\geq4$, with a unique $\T^d$-invariant contact structure.
\end{itemize}

Below we show that all pre-Lagrangian toric fibers in $(\T^3,\ker \alpha_k)$ and in the trivial principal bundles $\T^{d}\times \S^{d-1}$, with $d\geq 3$, i.e. in the cosphere bundles of tori, are non-displaceable. So far we were unable to prove non-displaceability of orbits in non-trivial principal $\T^{3}$ bundles over $\S^{2}$. These contact toric structures are explicitely described in \cite{marinkovic} where it is shown that each of them is contactomorphic to one of $\T^2\times L_k,$ $k\in\mathbb{N},$ ($L_1=\S^3$) with the unique contact toric structure (up to a reparametrization of the torus) and that the contact toric structure on $\T^2\times L_k=\T^2\times( \S^3/\Z_k)$ is induced by the diagonal $\Z_k$-action on $\S^3$-factor of $\T^2\times\S^3.$ Thus, in order to prove that all pre-Lagrangian toric fibers in $\T^2\times L_k,$ $k>1,$ are non-displaceable it would be enough to prove that all pre-Lagrangian toric fibers in $\T^2\times\S^3$ are non-displaceable.

\subsection{Non-displaceability in $(\T^3,\ker \alpha_k)$.}
In this section we prove Proposition \ref{non-displaceable 3torus}, i.e. the fact that every pre-Lagrangian toric orbit in the contact toric manifold 
  $(\T^3=\S^1_{(\theta)}\times \T^2_{(\theta_1,\theta_2)}, \ker \alpha_k),$ with $k\geq 1$, where $\alpha_k=\cos (2\pi k \theta)\,d\theta_1+\sin (2\pi k \theta)\,d\theta_2$, is non-displaceable.
We are grateful to Patrick Massot and the anonymous referee for explaining to us 
how the proof follows from the classification of tight contact structures on $\T^2\times[0,1].$

The toric $\T^2$-action on $(\T^3,\ker\alpha_k)$ is given by $(t_1,t_2)\ast(\theta,\theta_1,\theta_2)=(\theta,\theta_1+t_1,\theta_2+t_2).$ The moment map with respect to the contact form $\alpha_k$ is given by $\mu_k(\theta,\theta_1,\theta_2)=(\cos (2\pi k \theta),\sin (2\pi k \theta)).$ Therefore each fiber of $\mu_k$ has $k$ connected components. The manifolds $(\T^3,\ker\alpha_k)$ with $k>1$ are the only contact toric manifolds with disconnected moment map fibers.
\begin{proof}[Proof of Proposition \ref{non-displaceable 3torus}]
Let $L_{\theta}=\{\theta\} \times \T^2$ be a toric orbit, i.e. a connected component of some pre-Lagrangian toric fiber in $(\T^3,\ker \alpha_k)$. 
Suppose that
there exists a contact isotopy $\{\phi_t\}_{t\in [0,1]}$ such that 
$\phi_0$ is the identity and $\phi_1(L_{\theta})\cap L_{\theta}=\emptyset$.
Then one can lift this isotopy to an isotopy of $(\R \times \T^2, \ker  (\cos (2\pi \theta)\,d\theta_1 +\sin (2\pi \theta) \, d\theta_2))$ using the covering map 
$\R  \times \T^2 \rightarrow \S^1 \times \T^2$ given by $(\theta,\theta_1,\theta_2)\mapsto (\frac 1 k \theta,\theta_1,\theta_2)$, which is a local contactomorphism.
Let $\{\Phi_t\}_{t\in [0,1]}$ denote the lift of $\{\phi_t\}_{t\in [0,1]}$ to an isotopy of $\R \times \T^2$.
Then $\Phi_t$ displaces $L=\{k\theta \} \times \T^2$.
As $L$ is compact, one can construct a smooth ``cut-off" function, i.e. a function with compact support, which is $1$ on a neighborhood of $\cup_{t\in [0,1]} \Phi_t(L)$. 
Multiplying the contact Hamiltonian generating $\Phi_t$ by this cut-off function, we can modify $\Phi_t$ to an isotopy with compact support contained in $(k\theta-a ,k\theta+a)\times \T^2$ for some integer $a>0$. 
By a slight abuse of notation we still call the resulting isotopy $\Phi_t$.

Let $N$ denote the region in $[k\theta-a ,k\theta+a] \times \T^2$ which is bounded by the disjoint pre-Lagrangian $2$-tori $L$ and $L_1:=\Phi_1(L)$. 
The two boundary components of $N$ 
are boundary parallel in the (irreducible) manifold $[k\theta-a ,k\theta+a] \times \T^2$, and therefore $N$ is diffeomorphic to $[0,1] \times \T^2$. 

Let $\xi_N$ denote the contact structure on $N$ induced by the universally tight contact structure of the ambient space, $\R \times \T^2$.
(The universal cover of $\R \times \T^2$ is $(\R^3,  \ker (\cos (2\pi \theta)\,d\theta_1 +\sin (2\pi \theta) \, d\theta_2))$, which is contactomorphic to $\R^3$ with the standard contact structure and thus is tight by a result of Bennequin.)
Thus the contact structure $\xi_N$ on $N$ is also universally tight, and $(N,\xi_N)$ is contactomorphic to 
one of the structures in Giroux's classification (Theorem 1.5 \cite{Giroux2}; 
We remark here that a classification of tight contact structures on a thickened torus was also obtained independently by Honda, \cite{Honda}).
The characteristic foliations on both (pre-Lagrangian) boundary components of $N$ are linear foliations of the tori, 
and both with the same direction. By results in \cite{Giroux2} such contact manifolds are classified by their $\pi$-torsion as we explain in the next paragraph.

In \cite{Giroux2} Giroux analyses the set $\mathcal{SCT}(I \times \T^2)$
  of tight contact structures on $I \times \T^2$ with given characteristic foliations  
  on the boundary. The connected components of this set are,
by a theorem of Gray, the isotopy classes (for isotopies fixing the boundary) of contact structures on $I \times \T^2$. 
Theorem 1.5 in \cite{Giroux2} states in particular that if
the characteristic foliation on the boundary is topologically linearizable\footnote{The characteristic foliation on the boundary $\T^2$ is called topologically linearizable if all the orbits are dense or all the orbits are closed.}
then the map 
$(\chi_{\partial}, \tau_{\pi})$ from $\mathcal{SCT}(I \times \T^2)$
to $H_1(I \times \T^2;\Z) \times \mathbb{N}$, sending a tight contact structure to the pair (its relative Euler class, its $\pi$-torsion), has connected fibers. 
Moreover the theorem specifies the image of $(\chi_{\partial}, \tau_{\pi})$ on universally tight contact structures. 
For universally tight contact structures with identical linear characteristic foliations on the boundary the image is 
$\{0\} \times \mathbb{N}\subset H_1(I \times \T^2;\Z) \times \mathbb{N}$. Indeed, foliations on both boundary components generate the same element $\sigma \in H_1(\T^2 ;\R)$. Thus, using the notation from  \cite{Giroux2}, $B=\{\sigma\}$ and $X_u=\{0\}$. Therefore in this case the relative Euler class vanishes and the (isotopy classes of) contact structures are classified by their $\pi$-torsion.

Therefore the contact manifold $(N,\xi_N)$ is contactomorphic to
$[k\theta,k\theta+b] \times \T^2$ with the contact structure given by $\ker (\cos (2\pi \theta)\,d\theta_1 +\sin (2\pi \theta) \, d\theta_2)$,
for some $b \in \mathbb{Z}_{>0}$.
It follows that the isotopy $\Phi_t$ induces a contactomorphism from 
$([k\theta-a,k\theta]\times \T^2,  \ker (\cos (2\pi \theta)\,d\theta_1 +\sin (2\pi \theta) \, d\theta_2))$ to 
$([k\theta-a,k\theta+b]\times \T^2,  \ker (\cos (2\pi \theta)\,d\theta_1 +\sin (2\pi \theta) \, d\theta_2))$.
These manifolds have different $\pi$-torsions (by Proposition 3.42 of  \cite{Giroux2}) and thus such a contactomorphism
contradicts the aforementioned
classification of tight contact structures on $[0,1]\times \T^2 $ (Theorem 1.5 \cite{Giroux2}).
This proves that $L_{\theta}$ must be non-displaceable.
\end{proof}
\subsection{Non-displaceability in cosphere bundles of tori.}
Let $N$ be a smooth manifold and $P_+T^*N$ the corresponding cosphere bundle. The Liouville form on $T^*N$ descends to a contact form on $P_+T^*N$ making it a contact manifold.
 Its symplectization $S(P_+T^*N)$ is $T^*N$ without the zero section.
 Since the graph of any closed 1-form on $N$ is a Lagrangian submanifold of $T^*N,$ it follows that 
  the graph $\widetilde{L}_{\alpha}$ of a nowhere vanishing closed 1-form $\alpha$ is a Lagrangian submanifold in the symplectization. If $\pi:S(P_+T^*N)\rightarrow P_+T^*N$ is the projection then $L_{\alpha}=\pi(\widetilde{L}_{\alpha})$ is a pre-Lagrangian submanifold in $P_+T^*N$. 
 Thus to every nowhere vanishing closed 1-form on $N$ corresponds a pre-Lagrangian submanifold in $P_+T^*N.$
 If $N$ is a torus $\T^d$, one obtains as its cosphere bundle the contact manifold
$$P_+T^*\T^d\cong \T^d\times\mathbb{S}^{d-1}=\{(e^{2\pi i\theta_1}\ldots,e^{2\pi i\theta_d},x_1,\ldots,x_d)\in \T^d\times\mathbb{R}^d\hskip1mm|\hskip1mm \sum_{i=1}^dx_i^2=1\}$$
with the contact structure given by the kernel of the $1$-form $\sum_{i=1}^dx_id\theta_i$.
The $\T^d$-action on $(\T^d\times\mathbb{S}^{d-1},\ker(\sum_{i=1}^dx_id\theta_i))$ defined by
$$(t_1,\ldots, t_d)\ast(e^{2\pi i\theta_1},\ldots,e^{2\pi i\theta_d},x_1,\ldots,x_d)\rightarrow(t_1e^{2\pi i\theta_1},\ldots,t_d e^{2\pi i\theta_d},x_1,\ldots,x_d),$$ 
is an effective action by contactomorphisms and thus 
 it turns $\T^d\times\mathbb{S}^{d-1}$ into a contact toric manifold. 
 The moment map, $\mu:\T^d\times\mathbb{S}^{d-1}\rightarrow(\mathbb{R}^d)^*$, associated to the contact form $\sum_{i=1}^dx_id\theta_i$ is given by
 $$\mu(e^{2\pi i\theta_1}\ldots,e^{2\pi i\theta_d},x_1,\ldots,x_d)=\pi(x_1,\ldots,x_d).$$
 Hence, every pre-Lagrangian toric fiber in $\T^d\times\mathbb{S}^{d-1}$ is of the form 
 $\T^d\times\{\textrm{p}\}, $ for some point $\textrm{p}\in\mathbb{S}^{d-1}.$
 All these orbits are non-displaceable as we now explain.

 \begin{proof}[Proof of Proposition \ref{non-displaceable cosphere}]
 Take any pre-Lagrangian toric fiber $\T^d \times \{p\}$, $p=(p_1,\ldots,p_d) \in \mathbb{S}^{d-1}$,
 and observe that $\T^d \times \{p\}$ is the pre-Lagrangian corresponding to the graph of 
 the nowhere vanishing closed $1$-form $\sum_{i=1}^dp_id\theta_i$ on $\T^d$. 
 If all $p_j$'s are rational then $\T^d \times \{p\}$ is foliated by $(d-1)$-dimensional Legendrians. 
 Such Legendrians cannot be displaced from $\T^d \times \{p\}$: indeed,
 Corollary 2.5.2 from \cite{EliashbergHoferSalamon} says that if $\Lambda$ is any of these Legendrians and $\{\varphi_t\}$ is any contact isotopy, with $\varphi_0=id$ and $\varphi_1(\Lambda)$ transverse to $\T^d \times \{p\}$, then
 the number of intersection points $\varphi_1(\Lambda) \cap (\T^d \times \{p\})$
  is at least $2^{d-1}.$ In particular, $\T^d \times \{p\}$ is non-displaceable.
  As the set of points $p$ with rational coefficients is dense in $\S^{d-1}$, and displaceability is an open property, we deduce that all pre-Lagrangian toric fibers $\T^d \times \{p\}$ are non-displaceable.
Alternatively, instead of \cite{EliashbergHoferSalamon}, one could use Example 2.4.A from \cite{elpo} showing that these fibers are not only non-displaceable, but also stably non-displaceable.
 \end{proof}
 \subsection{Displaceability of pre-Lagrangian toric fibers when a toric action is not free.}\label{subsection not free displace}
 The goal of this Section is to prove Theorem \ref{not free displaceable}, i.e. to show that
 every compact connected contact toric manifold for which the toric action is not free contains uncountably many displaceable pre-Lagrangian toric fibers.
 The idea of the proof is the following.
 We consider separately manifolds of Reeb and not of Reeb type. 
  We show that the first ones are prequantizations of symplectic toric orbifolds (Lemma \ref{lemma prequantization of orbifold} below), and thus one can displace their fibers using the methods of Section \ref{section prequantization} (Corollary \ref{of reeb type displace} below). 
 The second ones are either overtwisted contact toric manifolds of dimension 3 or $\T^k\times\S^{2d+k-1}$ with $d\geq1,$ $k\geq1$. In Proposition \ref{3dim displace} below we analyze the $3$-dimensional overtwisted case and displace uncountably many fibers using the method of contact cuts. All the fibers of 
 $\T^k\times\S^{2d+k-1},$ $d\geq1,$ $k\geq1$, 
are displaceable by Theorem \ref{displace in boundary of stabilization}, already proved in Section \ref{section displace by cuts}.
  \subsubsection{Contact toric manifolds of Reeb type.}\label{section reeb type}
Let $(V^{2d-1},\xi)$ be a contact toric manifold and $\alpha$ a $\T^d$-invariant contact form for $\xi$, giving rise to the $\alpha$-moment map $\mu_{\alpha}.$ Then 
 the flow of the Reeb vector field $R_{\alpha}$ preserves the level sets of $\mu_{\alpha}$, as for any point $p\in V$ and any vector
 $X\in Lie(\T^d)$, we have
$$\langle d\mu_{\alpha}(R_{\alpha}(p)),X\rangle=d\alpha(R_{\alpha}(p),X)=0,$$
implying that 
$d\mu_{\alpha}(R_{\alpha}(p))=0$ for all $p \in V$ (Lemma 7.7.4 \cite{Geiges}).
Thus  the Reeb orbit is contained in the $\T^d$-orbit.
If in addition the Reeb vector field corresponds to some $\eta \in \mathfrak{t}^d$ in the Lie algebra of $\T^d$ then $V$ is called a contact toric manifold {\bf of Reeb type}. For these manifolds one can always perturb a contact form (keeping same contact structure) so that an integral multiple of the associated Reeb vector field corresponds to a lattice element $\eta \in \mathfrak{t}_{\bb{Z}}^d$  and thus the Reeb flow generates an $\S^1$-action which is a subaction of the action of $\T^d$ (\cite{BoyerGalicki}).
In that case the image of the contact moment map is a good strictly convex cone (Definition 2.17 in \cite{Lerman}), namely a cone over a convex polytope (Theorem 4.9 in \cite{BoyerGalicki}). Moreover, such contact manifolds can be obtained as contact reductions of the standard contact sphere (Theorem 5.1 in \cite{BoyerGalicki}). 

Let $V$ be any contact toric manifold of Reeb type and $C$ the corresponding good strictly convex cone.
Denote by
  $v_1,\ldots,v_N\in\mathbb{Z}^d$ the primitive inward vectors normal to the facets of the cone $C.$ That is
$$C=\bigcap_{i=1}^N\{x=(x_1,\ldots,x_d)\in (\mathbb{R}^d)^*| \langle x,v_i\rangle\geq0\}.$$
For any $R=\sum_{i=1}^Na_iv_i$ with $a_1,\ldots,a_N  \in \bb{R}_{>0}$ there exists a $\T^d$-invariant contact form $\alpha$ such that $R=R_{\alpha} $ is the Reeb vector field corresponding to it (Proposition 2.19 in \cite{abreumacarinicontact}). Take any such $R$ in the lattice of the Lie algebra of $T$, i.e. $R=\sum_{i=1}^Na_iv_i \in \mathfrak{t}^d_{\bb{Z}} $, $a_1,\ldots,a_N  \in \bb{R}_{>0}$.
The corresponding Reeb flow generates an $\S^1$-action on $V$ and one can perform symplectic reduction of the symplectization $SV$ with respect to the lift of that $\S^1$-action. The result of this reduction is a symplectic orbifold $M$ and $V$ is a prequantization of $M$ (Theorem 2.7 in \cite{Boyer}; see also Lemma 3.7 in \cite{Lerman2}).
Recall that symplectic toric orbifolds are classified by rational and simple polytopes with positive integral labels attached to each facet (\cite{LermanTolman}). The points in the preimage of a facet with label $m$ have neighborhoods modeled on $\bb{C}^d /\bb{Z}_m$.
The polytope corresponding to $M$ is the intersection of the moment cone $C$ for $V$ with the hyperplane perpendicular to $R$.
Note that this intersection is always a compact polytope. Indeed, if the hyperplane is given by $h:=\{x \in (\bb{R}^d)^*\,|\,\langle x,R\rangle =c\}$ then any $x \in C \cap h$ must satisfy
$$a_1\langle x,v_1\rangle+\cdots+a_N\langle x,v_N\rangle=c,\,\,\langle x,v_1\rangle\geq0,\ldots, \langle x,v_N\rangle\geq0.$$
As $a_j >0$ and $v_1,\ldots,v_N$ generate the whole $\bb{R}^d$ (due to strict convexity), the above condition implies that $0 \leq \langle x,v_j\rangle\leq\frac{c}{a_j}$ and $C \cap h$ is compact (it is empty if $c<0$).
The facets of the polytope of $M$ are intersections of the facets of $C$ with the hyperplane $h$. The label of a facet corresponding to the facet of $C$ with inward normal $v_j$ is the index of the lattice generated by $v_j$ and $R$ inside $span_{\bb{R}}( v_j,R) \cap \bb{Z}^d$, where $\bb{Z}^d$ is the lattice of $\bb{R}^d=\mathfrak{t}^d$. If the collection $\{v_j,R\}$ can be completed to a $\bb{Z}$ basis of $\bb{Z}^d$, the label on the corresponding facet is $1$.
\begin{lemma} \label{lemma prequantization of orbifold}
If $V$ is a contact toric manifold of Reeb type then $V$ can be presented as a prequantization of some symplectic toric orbifold having at least one label equal to $1.$
\end{lemma}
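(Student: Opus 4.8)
The only freedom in the construction recalled above is the choice of the Reeb vector $R$, which may be any lattice vector of the form $R=\sum_{i=1}^N a_iv_i$ with all $a_i>0$; equivalently, writing $U$ for the interior of the (full‑dimensional) convex cone $K$ generated by $v_1,\dots,v_N$, the admissible choices of $R$ are exactly the points of $\mathbb{Z}^d\cap U$. By the label formula stated just before the lemma, it suffices to exhibit one such $R$ together with one facet $F_j$ of $C$ for which the pair $\{v_j,R\}$ can be completed to a $\mathbb{Z}$-basis of $\mathbb{Z}^d$. So the plan is to fix a facet $F_1$, with primitive inward normal $v_1$, and to build $R\in\mathbb{Z}^d\cap U$ with $\{v_1,R\}$ part of a basis. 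I will use that, $v_1$ being primitive, the quotient $\Lambda:=\mathbb{Z}^d/\mathbb{Z}v_1$ is free abelian of rank $d-1$ and that $\{v_1,u\}$ extends to a $\mathbb{Z}$-basis exactly when the image $\pi(u)$ of $u$ under $\pi\colon\mathbb{Z}^d\to\Lambda$ is primitive. When $d=2$ there is nothing to do: $C$ has exactly two facets, meeting at $\{0\}$, so the goodness of $C$ forces $\{v_1,v_2\}$ to be a $\mathbb{Z}$-basis of $\mathbb{Z}^2$, and then $R:=v_1+v_2\in\mathbb{Z}^2\cap U$ has $\{v_1,R\}$ a basis as well.

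Assume $d\ge 3$. The first step is to locate a good ``neighbour'' of $F_1$: since $F_1$ is a pointed cone of dimension $d-1\ge 2$ it has a facet, and that facet is a codimension‑$2$ face of $C$, hence of the form $F_1\cap F_2$ for a second facet $F_2$; the goodness of $C$ then guarantees that $\{v_1,v_2\}$ extends to a $\mathbb{Z}$-basis, so $f_1:=\pi(v_2)$ is a primitive element of $\Lambda$, which I extend to a $\mathbb{Z}$-basis $f_1,\dots,f_{d-1}$ of $\Lambda$. The second step is to pick an auxiliary lattice point $R'$: since $d\ge 3$ the plane $\mathrm{span}_{\mathbb{R}}(v_1,v_2)$ is a proper subspace and so cannot contain the nonempty open set $U$, so I may choose $R'\in(\mathbb{Z}^d\cap U)\setminus\mathrm{span}_{\mathbb{R}}(v_1,v_2)$ (take a rational point of $U$ off this subspace and clear denominators, staying inside the cone $U$). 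Writing $\pi(R')=c_1f_1+\dots+c_{d-1}f_{d-1}$, the fact that $R'\notin\mathbb{R}v_1+\mathbb{R}v_2$ forces $(c_2,\dots,c_{d-1})\ne 0$, so $h:=\gcd(c_2,\dots,c_{d-1})\ge 1$.

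The last step is to correct $R'$ by multiples of $v_1+v_2$: for $m\ge 0$ set $R_m:=m\,v_1+m\,v_2+R'$. Since $v_1,v_2\in K$, $R'\in\mathrm{int}(K)=U$, and $K$ is convex, each $R_m$ lies in $\mathbb{Z}^d\cap U$ and is therefore an admissible Reeb vector. Now $\pi(R_m)=(m+c_1)f_1+c_2f_2+\dots+c_{d-1}f_{d-1}$ is primitive in $\Lambda$ precisely when $\gcd(m+c_1,c_2,\dots,c_{d-1})=\gcd(m+c_1,h)=1$; and as $m$ runs over the nonnegative integers the integer $m+c_1$ eventually takes a value congruent to $1$ modulo $h$, for which this gcd equals $1$. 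For such an $m$, $\{v_1,R_m\}$ extends to a $\mathbb{Z}$-basis, so $R_m$ presents $V$ as a prequantization of a toric symplectic orbifold carrying the label $1$ on the facet coming from $F_1$, which is what we want.

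I expect the delicate point to be the first step for $d\ge 3$: producing a facet $F_2$ adjacent to $F_1$ whose normal has primitive image in $\Lambda$. This is exactly what the good‑cone hypothesis supplies, and without it (for a merely rational polyhedral cone) the subsequent gcd adjustment would have no foothold. The remaining ingredients — that $U$ is a genuine open cone, that adding $m(v_1+v_2)$ keeps one inside it, and the elementary congruence argument on $\gcd(m+c_1,h)$ — are routine, as is the separate $d=2$ case.
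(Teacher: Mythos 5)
Your argument for $d\ge 3$ is correct, and it reaches the conclusion by a genuinely different route from the paper. The paper exploits the goodness condition at a deepest face: it takes the $d-1$ facets meeting in an edge, uses goodness to put their normals in the form $v_1=e_1,\dots,v_{d-1}=e_{d-1}$, and then adjusts the positive coefficients $a_j$ explicitly so that $R=\sum_j a_jv_j$ is a lattice vector with last coordinate $\pm 1$; this makes $\{v_1,\dots,v_{d-1},R\}$ a $\mathbb{Z}$-basis and puts label $1$ on all $d-1$ of those facets simultaneously. You instead invoke goodness only at a codimension-$2$ face (to get one primitive class $\pi(v_2)$ in $\Lambda=\mathbb{Z}^d/\mathbb{Z}v_1$) and then force primitivity of $\pi(R)$ by the coordinate-free correction $R'\mapsto R'+m(v_1+v_2)$ together with the congruence argument on $\gcd(m+c_1,h)$. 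The supporting steps are all sound: the set of strictly positive combinations of a spanning family does equal the interior of the cone it generates, so the admissible Reeb vectors are exactly $\mathbb{Z}^d\cap U$; $\mathrm{int}(K)+K\subset\mathrm{int}(K)$ keeps $R_m$ admissible; and the reduction of ``$\{v_1,R\}$ extends to a basis'' to primitivity of $\pi(R)$ in $\Lambda$ is the right criterion. What the paper's version buys is $d-1$ label-$1$ facets and an explicit formula; what yours buys is a weaker hypothesis (goodness only at codimension $2$) and no choice of coordinates. Either output suffices for Corollary \ref{of reeb type displace}, which needs only one label-$1$ facet from which to launch a probe.

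The one place to push back is your $d=2$ case. The goodness condition is imposed only on faces of positive dimension, so it says nothing about the apex $\{0\}=F_1\cap F_2$ and does \emph{not} force $\{v_1,v_2\}$ to be a $\mathbb{Z}$-basis: lens spaces $L_p$, $p>1$, are of Reeb type and their two normals generate an index-$p$ sublattice of $\mathbb{Z}^2$. The case is still easy to handle (with $v_1$ primitive, say $v_1=(1,0)$ after a lattice change of basis and $v_2=(a,p)$, take $a_2=1/p$ and $a_1>0$ so that $R=a_1v_1+a_2v_2=(c,1)\in\mathbb{Z}^2$; then $\{v_1,R\}$ is a basis), but your stated justification is not valid. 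Strictly speaking the classification by good cones only covers $\dim V>3$, so dimension $3$ is glossed over by the paper as well; the specific claim that is false as written, however, is yours.
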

\begin{proof}
Let C be a good, strictly convex cone corresponding to $V$, with inward normals $v_1,\ldots,v_N \in \bb{R}^d$. Suppose the facets corresponding to normals  $v_1,\ldots,v_{d-1}$ intersect in an edge.
The assumption that the cone is good implies that the vectors $v_1,\ldots,v_{d-1}$ form a $\bb{Z}$-basis of $\bb{Z}^{d-1}=span_{\bb{R}}(v_1,\ldots,v_{d-1}) \, \cap\, \mathfrak{t}_{\bb{Z}}$. Without loss of generality we can assume that $v_1=(1,0,\ldots, 0)$, ... , $v_{d-1}=(0,\ldots, 0, 1, 0)$. We show that one can choose $a_j >0$, $j=1,\ldots,N$, so that $R=\sum_{j=1}^{N}\, a_j v_j =(c_1,\ldots, c_{d-1},1)$ or $R=(c_1,\ldots, c_{d-1},-1)$ with each $c_j \in \bb{Z}$. 
In both cases the collection $\{v_1,\ldots,v_{d-1}, R\}$ forms a $\bb{Z}$-basis of $\bb{Z}^d$. This implies that $V$ is a prequantization of a symplectic toric orbifold obtained as a reduction of $SV$ by the circle action generated by $R$, and that facets of this orbifold corresponding to normals $v_1, \ldots, v_{d-1}$ have labels $1$.
\vskip1mm
For each $v_j$ let $(v_{j,1}, \ldots, v_{j,d})$ denote its coordinates. Suppose that at least one of $v_{d,d}, \ldots, v_{N,d}$ is positive. With this assumption, we find constants $a_j>0$ such that $R=\sum_{j=1}^{N}\, a_j v_j =(c_1,\ldots, c_{d-1},1)$.
(If all $v_{d,d}, \ldots, v_{N,d}$ were non positive
we would similarly find $a_j >0$, so that $R=(c_1,\ldots, c_{d-1},-1)\in \bb{Z}^d$.) 
Note that at least one of $v_{d,d}, \ldots, v_{N,d}$ is non-zero as $v_1,\ldots, v_N$ form a basis of $\bb{R}^d.$
Take any $a_{d}, \ldots, a_N \in \bb{R}_+$ such that $\sum_{j=d}^{N} a_j \,v_{j,d}=1$ (so also $\sum_{j=1}^{N} a_j \,v_{j,d}=1$). For $a_j$, $j=1,\ldots, d-1$, take any positive real number such that $a_j + \sum_{l=d}^{N} a_l \,v_{l,j}$ is an integer. For example one can take $ a_j = -\sum_{l=d}^{N} a_l \,v_{l,j}$ if this sum was non-positive, and $a_j = \left \lceil{\sum_{l=d}^{N} a_l \,v_{l,j}}\right \rceil - \sum_{l=d}^{N} a_l \,v_{l,j}$ if this sum is positive.
In both cases $\sum_{l=1}^{N}v_{l,j}= a_j + \sum_{l=d}^{N} a_l \,v_{l,j}$ is an integer.
\end{proof}
\begin{corollary}\label{of reeb type displace} Any contact toric manifold $V$ of Reeb type contains uncountably many displaceable pre-Lagrangian toric fibers.
\end{corollary}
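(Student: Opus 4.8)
The plan is to reduce the statement to the already-established displaceability results via the prequantization machinery developed in Section \ref{section prequantization}. By Lemma \ref{lemma prequantization of orbifold}, a toric contact manifold $V$ of Reeb type can be realized as a prequantization $\pi \colon V \to M$ of a toric symplectic orbifold $M$ whose moment polytope $\Delta$ carries at least one facet with label $1$. So the task becomes: exhibit a displaceable Lagrangian toric fiber in such an orbifold $M$, and then lift it. For the lift, I would invoke the contrapositive of Proposition \ref{lift.preq}: if $L \subset M$ is a displaceable Lagrangian toric fiber, then $\pi^{-1}(L) \subset V$ is a displaceable pre-Lagrangian toric fiber. (Strictly speaking Proposition \ref{lift.preq} is phrased for smooth toric symplectic manifolds, so the first substantive step is to note that the method-of-probes displacement in the orbifold setting still lifts; this is exactly the content of \cite{abreumacarini} in the orbifold case, and the Hamiltonian used to displace a probe fiber is supported away from the orbifold singular locus, so Lemma \ref{lift} applies verbatim to its pullback.)

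The core step is therefore to find a displaceable Lagrangian toric fiber in a toric symplectic orbifold $M$ that has a facet $F$ of label $1$. Here I would use McDuff's method of probes (\cite{mcduff}, \cite{abreumacarini}). Because $F$ has label $1$, a neighborhood of the preimage of the interior of $F$ is a smooth (non-orbifold) part of $M$ modeled on the standard smooth corner $\bb{C}^d$-chart, so in suitable coordinates the facet $F$ is cut out by an integral affine inequality of the form $\langle x, v_F\rangle \geq \lambda$ with $v_F$ primitive and extendable to a $\bb{Z}$-basis of the lattice. Pick a point $y$ in the relative interior of $F$ whose local affine chart identifies a neighborhood of $y$ in $\Delta$ with a neighborhood of the origin in $\{x_1 \geq 0\} \subset \bb{R}^d$, and shoot a probe from $y$ into $\Delta$ in the primitive inward direction $v_F$ (here $e_1$). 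Any fiber over a point on this probe segment at parameter less than half the length of the probe inside $\Delta$ is displaceable. Such points certainly exist — one only needs the probe to travel a positive distance into $\Delta$, which holds since $y$ is in the interior of a facet and $\Delta$ is full-dimensional. This yields the desired displaceable fiber $L \subset M$.

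Assembling: choose the prequantization $\pi \colon V \to M$ from Lemma \ref{lemma prequantization of orbifold}, pick the probe-displaceable fiber $L = \mu^{-1}(c) \subset M$ as above, apply the orbifold version of the lifting property (Lemma \ref{lift}) to the Hamiltonian displacing $L$ to obtain $\widetilde\varphi \in \mathrm{Cont}_0(V,\xi)$, and conclude by Proposition \ref{lift.preq} that $\widetilde\varphi$ displaces $\pi^{-1}(L)$, which is a pre-Lagrangian toric fiber of $V$. Hence $V$ contains a displaceable pre-Lagrangian toric fiber.

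The main obstacle I expect is bookkeeping around the orbifold structure: one must make sure the probe is launched through a facet of label $1$ (so that the relevant chart is genuinely smooth and the probe argument of \cite{mcduff} applies unchanged), and that the displacing Hamiltonian, pulled back through $\pi$, still generates a genuine contact isotopy of $V$ — i.e. that the construction of Lemma \ref{lift} and Proposition \ref{lift.preq} goes through over the orbifold. Neither is deep, but both require the label-$1$ input from Lemma \ref{lemma prequantization of orbifold}, which is precisely why that lemma was proved; without it the facet chart could be a quotient $\bb{C}^d/\bb{Z}_m$ and the probe lengths (and the lifting) would need extra care.
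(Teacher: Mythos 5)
Your proposal is correct and follows essentially the same route as the paper: invoke Lemma \ref{lemma prequantization of orbifold} to present $V$ as a prequantization of a toric symplectic orbifold with a label-$1$ facet, displace a fiber there by a probe launched from that facet, and lift via Proposition \ref{lift.preq}. The extra care you take about the probe staying in the smooth locus and the lift applying in the orbifold setting is exactly the (implicit) content of the paper's one-line appeal to the generalization of McDuff's method.
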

\begin{proof} The proof uses McDuff's method of probes for displacing Lagrangian fibers in symplectic toric manifolds (\cite{mcduff}, \cite{abreubormanmcduff}), recalled here in Section \ref{method of probes}. Though this method was developed for symplectic toric manifolds, it can be generalized to symplectic toric orbifolds as long as the probe starts at a facet with label $1$. This is because the isotopy constructed via the method of probes is supported only in a small neighborhood of the preimage of a probe. A probe is a half open interval almost entirely contained in the interior of a moment polytope: only the starting point of a probe lies on the interior of a facet. Therefore if only this facet has label $1$, a small enough neighborhood of the preimage of a probe contains no orbifold points. Using Lemma \ref{lemma prequantization of orbifold}, the method of probes and Proposition \ref{lift.preq} we can displace uncountably many pre-Lagrangian toric fibers.
\end{proof}

\subsubsection{Toric contact manifolds not of Reeb type, with toric action that is not free.} 
{\bf Dimension $3$.}
\\We start by analyzing $3$-dimensional manifolds separately as these are not, in general, uniquely determined by their moment cones.
\begin{proposition}\label{3dim displace}
Let $V$ be a $3$-dimensional contact toric manifold not of Reeb type and such that the toric action is not free. Then $V$ contains uncountably many displaceable pre-Lagrangian toric fibers.
\end{proposition}
\begin{proof}
It was shown by Lerman in Section 6.2 of \cite{Lerman} (proof of Theorem 2.18 (2)) that 
$3$-dimensional contact toric manifold for which the toric action is not free
are diffeomorphic to lens spaces (including $\S^3$ and $\S^1 \times \S^2$), and are classified by pairs of real numbers $(t_1,t_2)$ such that
$0\leq t_1 <2\pi,$ $t_1< t_2,$ and $\tan t_i \in \Q$ or $\cos t_i=0$, for each $i=1,\,2$. The last condition means that $(\cos t_i,\sin t_i) \in \R^2$ lies on a ray through some $(m_i,n_i)\in \Z^2$.
Moreover, the assumption of being not of Reeb type implies that the moment cone is not strictly convex
and thus $t_1+\pi \leq t_2$.
The manifold corresponding to $(t_1,t_2)$, denoted here by $M_{t_1,t_2}$, is obtained from $\R \times \S^1\times \S^1$  with the contact form 
$\alpha=\cos t \,d\theta_1+ \sin t\,d\theta_2$ at $(t,\theta_1,\theta_2) \in \R \times \S^1\times \S^1$ by contact cutting 
performed twice. 
Namely, if $(m_i,n_i)\in \Z^2$ denotes the primitive integral vector in the direction $(\cos t_i,\sin t_i)$, $i=1,2$, then
$$M_{t_1,t_2}=[t_1,t_2] \times \S^1\times \S^1/\sim,$$
where the relation $\sim$ denotes the following identifications on the boundary of $[t_1,t_2] \times \S^1\times \S^1$:
on $\{t_1\} \times \S^1\times \S^1$ we identify the orbits of a circle action generated by 
$-n_1 \frac{\partial}{\partial \theta_1}+m_1 \frac{\partial}{\partial \theta_2}$,
and 
on $\{t_2\} \times \S^1\times \S^1$ we identify the orbits of a circle action generated by 
$n_2 \frac{\partial}{\partial \theta_1}-m_2 \frac{\partial}{\partial \theta_2}$.
Topologically this is equivalent to gluing two solid tori along their boundaries by various automorphisms of the boundary therefore $M_{t_1,t_2}$ is a lens space. 
The moment cone of $M_{t_1,t_2}$ is the cone in $\R^2$ over the image of the interval $[t_1,t_2]\subset \R$ under the map $t\mapsto (\cos t, \sin t) \in \R^2$. Hence it is the whole $\R^2$ whenever $t_2-t_1 \geq 2\pi$.

Take any $t_2'>t_1$ such that $t_2'-t_1<\pi$ and $(\cos t_2',\sin t_2') \in \R^2$ lies on a ray through some $(m_2',n_2')\in \Z^2$.
It is obvious from the construction that $M_{t_1,t_2'}$ can be obtained from $M_{t_1,t_2}$ by a contact cut. Therefore any isotopy of $M_{t_1,t_2'}$ whose support is compact and contained in 
$([t_1,t_2') \times \S^1\times \S^1/\sim )\ \subset M_{t_1,t_2'}$, can be extended to an isotopy of $M_{t_1,t_2}$.
Observe that the moment cone of $M_{t_1,t_2'}$ is strictly convex, thus $M_{t_1,t_2'}$ is a prequantization space. 
Similarly to what was done in Section \ref{section displace by cuts}, one can displace uncountably many pre-Lagrangian toric fibers of $M_{t_1,t_2'}$ by isotopies which are lifts of isotopies obtained via the probes method.
If a probe is based on a facet ``far from the cut", i.e. such that it's preimage under prequantization map is disjoint from the set 
$\{t_2'\}\times \S^1\times \S^1/\sim$, then such isotopy has a compact support contained in $([t_1,t_2') \times \S^1\times \S^1/\sim )$ and can be extended to an isotopy of $M_{t_1,t_2}$.
\end{proof}

\noindent {\bf Dimension greater than $3$.}
\\We now analyze the contact toric manifolds not of Reeb type, whose action is not free, and whose dimension is greater than $3$. 
These are determined by their (not strictly convex) good moment cones. Let $k$ be the dimension of the maximal linear subspace contained in the good moment cone, and $(d+k)$, $d\geq  1$, be the dimension of the torus acting. 
(When $d=0$ the manifold is necessarily a $\T^k$ bundle over $\S^{k-1}$ and the toric action is free).
Then such a cone is isomorphic to the cone of $\T^k \times \S^{2d+k-1}$ and therefore the manifold must be equivariantly contactomorphic to $\T^k \times \S^{2d+k-1}$ with the unique $\T^{d+k}$ invariant contact structure, described in Section \ref{section displace by cuts}, (Theorem 2.18 in \cite{Lerman}; see also the proof of Theorem 1.3 in Section 7 of \cite{Lerman}). 
Thus, by Theorem \ref{displace in boundary of stabilization} (Section \ref{section displace by cuts}) all pre-Lagrangian toric fibers in $\T^k \times \S^{2d+k-1}$ are displaceable. 
This concludes the proof of Theorem \ref{not free displaceable}.
\section*{Acknowledgments}
The authors are very grateful to Patrick Massot and to the anonymous referee who independently 
pointed out to us that the proof in Remark \ref{displace all with one iso} can be simplified to the proof of Proposition \ref{displace sphere},
and they explained to us how to prove Proposition \ref{non-displaceable 3torus}.
Additionally we would like to thank the referee for his/her comments that helped us improve the exposition.
Moreover we thank Miguel Abreu and Strom Borman for helpful comments on the first version of the paper and Sheila Sandon, Maia Fraser and Roger Casals for useful discussions. 
The authors were supported by the Funda\c{c}\~ao para a Ci\^encia e a Tecnologia (FCT, Portugal):
fellowships SFRH/BD/77639/2011 (Marinkovi\'c) and SFRH/BPD/87791/2012 (Pabiniak);
projects PTDC/MAT/117762/2010 (Marinkovi\'c and Pabiniak) and 
EXCL/MAT-GEO/0222/2012 (Pabiniak).

\end{document}